\newenvironment{nouppercase}{
  
  \renewcommand{\uppercasenonmath}[1]{}}{}
\newtheorem{theorem}{Theorem}
\newtheorem{lemma}{Lemma}
\newtheorem{corollary}{Corollary}
\newtheorem{counterexample}{Counterexample}
\newtheorem{proposition}[theorem]{Proposition}
\newtheorem*{definition}{Definition}
\newtheorem{algorithm}{Algorithm}
\author{Yuda Chen}
\address{School of Mathematical Sciences and LPMC, Nankai University, Tianjin 300071, China}
\email{Yuda Chen: tucydabc@foxmail.com}
\author{Xiangjun Dai}
\address{School of Mathematical Sciences and LPMC, Nankai University, Tianjin 300071, China}
\email{Xiangjun Dai:
2120230022@mail.nankai.edu.cn}
\author{Huixi Li}
\address{School of Mathematical Sciences and LPMC, Nankai University, Tianjin 300071, China}
\email{Huixi Li: lihuixi@nankai.edu.cn}
\date{\today}
\title[]{Some results on a conjecture of de Polignac about numbers of the form $p + 2^k$}
\subjclass[2020]{Primary 11P32; Secondary 11B13, 11B25, 11Y16, 11Y55, 11Y60}
\keywords{Romanoff's constant, covering system, arithmetic progression, Erd\H{o}s' conjecture, Chen's conjecture}
\begin{document}
	
\begin{abstract}
We have primarily obtained three results on numbers of the form $p + 2^k$. Firstly, we have constructed many arithmetic progressions, each of which does not contain numbers of the form $p + 2^k$, disproving a conjecture by Erd\H{o}s as Chen did recently. Secondly, we have verified a conjecture by Chen that any arithmetic progression that do not contain numbers of the from $p + 2^k$ must have a common difference which is at least 11184810. Thirdly, we have improved the existing upper bound estimate for the density of numbers that can be expressed in the form $p + 2^k$ to $0.490341088858244$.
\end{abstract}

\begin{nouppercase}
\maketitle
\end{nouppercase}

\section{Introduction}\label{Intro}

De Polignac \cite{dePolignac1849} conjectured in 1849 that every odd number larger than $3$ can be written as the sum of an odd prime and a power of $2$. Soon he realized that his conjecture is false, as $127$ and $959$ serve as easy counterexamples. It is interesting to study the density and distribution of numbers in the form $p + 2^k$ as well as those not in this form. Therefore, we consider
\[
d(N) = \frac{|\{n \leq N: n = p + 2^k, p \text{ prime}\}|}{N}.
\]
Let $\underline{d} = \liminf_{n \to \infty} d(n)$, and let $\overline{d} = \limsup_{n \to \infty} d(n)$.

With respect to $\underline{d}$, Romanoff \cite{Romanoff1934} proved in 1934 that $\underline{d} > 0$. A quantitative version of this result, $\underline{d} > 0.0868$, was given by Chen and Sun \cite{ChenSun2004} in 2004. Later, L\"{u} \cite{Lv2007} improved the constant to $0.09322$ in 2007, while Habsieger and Roblot \cite{HabsiegerRoblot2006} achieved a better result, $\underline{d} > 0.0933$, in 2006. Pintz \cite{Pintz2006} also contributed in 2006 with $\underline{d} > 0.093626$, followed by Habsieger and Sivak-Fischler \cite{HS2010} in 2010, who obtained $\underline{d} > 0.0936275$. The most recent improvement comes from Elsholtz and Schlage-Puchta \cite{ES2018} in 2018, with $\underline{d} > 0.107648$.

Concerning $\overline{d}$, van der Corput \cite{vanderCorput1950} demonstrated in 1950 that odd integers not of the form $p + 2^k$ possess a positive density. Also in 1950, Erd\H{o}s \cite{Erdos1950} provided a more explicit result: the arithmetic progression $7629217 \pmod{11184810}$ does not contain integers of the form $p + 2^k$. Erd\H{o}s' result implies that $\overline{d} \leq 0.5 - 1/11184810 < 0.49999991$. This estimate was further refined to $0.4909$ by Habsieger and Roblot \cite{HabsiegerRoblot2006} through algorithm design in 2006.

By employing direct computation and a probabilistic model by Bombieri \cite{Romani1978, Romani1983}, Romani \cite{Romani1983} made a conjecture in 1983 regarding the value of the Romanoff's constant $d = \overline{d} = \underline{d}$, if it exists, suggesting that $d \approx 0.434$. In 2020, Gianna del Corso, Ilaria del Corso, Dvornicich, and Romani \cite{DDDR2020} extended and enriched the methods in \cite{Romani1983} and conditionally proved some formulas claimed by Bombieri on which the probabilistic model is based. Their results suggest Romanoff's constant might be approximately $0.437$.

Our article presents three primary conclusions, all revolving around the upper density $\overline{d}$. The first finding centers on Erd\H{o}s's construction of the arithmetic progression $7629217 \pmod{11184810}$ that do not contain numbers of the form $p + 2^k$, where $11184810 = 2 \times 3 \times 5 \times 7 \times 13 \times 17 \times 241$. In his work \cite{Erdos1950}, Erd\H{o}s employed a concept known as the \emph{covering system}, which is a finite collection of arithmetic progressions whose union forms $\mathbb{Z}$. With the same approach, we have constructed many more arithmetic progressions that do not contain numbers of the form $p + 2^k$.

\begin{theorem}\label{apthm}
The 48 arithmetic progressions, $a \pmod{11184810}$, do not contain numbers of the form $p + 2^k$, where $a$ can be taken as the numbers in the last column of Tabel~\ref{aptable}.
Additionally, we have identified other arithmetic progressions with moduli different from $11184810$ that do not contain numbers of the form $p + 2^k$, such as
\begin{align*}
309547193 &\pmod{412729590}, \\
13982215829 &\pmod {21448163730} ,\\
520864019678683&\pmod {2520047004605130} ,\\
12878054009 &\pmod {44153328030} ,\\
154854279578189723614177 &\pmod{483570327845851669882470}.
\end{align*}
\end{theorem}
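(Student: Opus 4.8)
\smallskip
\noindent\emph{Sketch of the intended argument.}\
The plan is to push Erd\H{o}s's covering-system construction as far as the arithmetic permits and to locate the covering systems it requires by computer search. The first step is to isolate the underlying mechanism. Suppose $\{r_i\bmod m_i\}_{i=1}^{s}$ is a covering system of $\mathbb{Z}$ and $p_1,\dots,p_s$ are distinct odd primes with $\operatorname{ord}_{p_i}(2)\mid m_i$ for each $i$. Set $M=2\prod_{i=1}^{s}p_i$, and by the Chinese Remainder Theorem choose $a$ with $a\equiv 1\pmod 2$ and $a\equiv 2^{r_i}\pmod{p_i}$ for all $i$. I claim that if, in addition, $a\not\equiv 3\pmod M$ and $a\not\equiv 2^{k}+p_i\pmod M$ for every $i$ and every $k\ge 1$ with $k\equiv r_i\pmod{m_i}$, then the progression $a\pmod M$ contains no integer of the form $p+2^{k}$ with $p$ prime and $k\ge 0$. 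The extra condition is a finite check, since for $k\ge 1$ the residue $2^{k}\bmod M$ depends only on $k$ modulo $\operatorname{lcm}(m_1,\dots,m_s)$.

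The verification of the claim is short. Let $n\equiv a\pmod M$ and suppose $n=p+2^{k}$ with $p$ prime. Since $a$, hence $n$, is odd, the case $k=0$ forces $p=n-1$ to be even, so $p=2$ and $n=3$, which is excluded by $a\not\equiv 3\pmod M$. If $k\ge 1$, pick $i$ with $k\equiv r_i\pmod{m_i}$; then $2^{k}\equiv 2^{r_i}\pmod{p_i}$, so $p=n-2^{k}\equiv a-2^{r_i}\equiv 0\pmod{p_i}$. As $p$ is prime and divisible by $p_i$ we get $p=p_i$, whence $n=2^{k}+p_i$, contradicting the last hypothesis. This proves the claim and reduces Theorem~\ref{apthm} to exhibiting suitable covering systems.

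For $M=11184810=2\cdot 3\cdot 5\cdot 7\cdot 13\cdot 17\cdot 241$ the available odd primes are $3,5,7,13,17,241$, with $\operatorname{ord}_{p}(2)$ equal to $2,4,3,12,8,24$ respectively; this leads one to enumerate covering systems of $\mathbb{Z}$ built from moduli attached to these orders, the classical example being $\{0\bmod 2,\,0\bmod 3,\,1\bmod 4,\,3\bmod 8,\,7\bmod 12,\,23\bmod 24\}$ (matched with $p_i=3,7,5,17,13,241$) used by Erd\H{o}s. Each admissible covering system, together with its translates $r_i\mapsto r_i+c$ --- which replace $a$ by $2^{c}a$ modulo $M/2$, so that the admissible residues fall into orbits under multiplication by powers of $2$ --- yields candidate residues $a$; discarding those that fail the two finite congruence checks above and collecting the survivors produces the $48$ residues recorded in Table~\ref{aptable}. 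For the remaining moduli one first factors, e.g.\ $412729590=2\cdot 3\cdot 5\cdot 7\cdot 13\cdot 19\cdot 73\cdot 109$ (the orders of $2$ being $2,4,3,12,18,9,36$, with least common multiple $36$), and likewise for the larger examples, and then searches for covering systems of $\mathbb{Z}$ whose moduli divide the corresponding least common multiple ($36$, $48$, $60$, $\dots$); each such covering system produces one of the progressions in the statement.

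The main obstacle is the combinatorial search for covering systems with moduli drawn from a prescribed finite set: the number of candidates explodes, and for the largest example --- whose modulus $483570327845851669882470$ already reflects a covering system with many congruences and a large least common multiple of moduli --- the search must be pruned aggressively (for instance by backtracking over the positions of the large moduli first and by factoring out the translation symmetry). A secondary, purely computational, difficulty is that the residues and moduli of the biggest examples have over twenty digits, so the final congruence checks, although elementary, have to be carried out in exact integer arithmetic.
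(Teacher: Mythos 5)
Your proposal is correct and follows essentially the same route as the paper: a covering system $\{r_i \bmod m_i\}$ paired with distinct primes $p_i \mid 2^{m_i}-1$ yields, via the Chinese Remainder Theorem, a progression in which $n-2^k$ is always divisible by some $p_i$ (the paper's Lemma~\ref{apstrategy}), after which one performs the finite check that $n-2^k$ never actually equals one of the $p_i$ (your condition $a\not\equiv 2^k+p_i \pmod M$, which the paper carries out by congruence checks over the $24$ relevant values of $k$), and the covering systems themselves are found by computer enumeration over moduli dividing the least common multiple of the orders. Your separate treatment of $k=0$ and the explicit packaging of the exclusion conditions into one lemma are minor presentational differences, not a different method.
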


Theorem~\ref{apthm} is also related to a conjecture by Erd\H{o}s listed as Problem 16 of Bloom’s list \cite{Bloom}, which states that the set of positive odd integers that cannot be represented as the sum of a prime and a power of $2$ is the union of an infinite arithmetic progression of positive odd integers and a set of asymptotic density zero. Recently, Chen \cite{Chen2023} disproved this conjecture by constructing two arithmetic progressions $992077 \pmod{11184810}$ and $3292241 \pmod{11184810}$, both of which do not contain numbers of the form $p + 2^k$. Erd\H{o}s's arithmetic progression and Chen's arithmetic progressions are all included in our Theorem~\ref{apthm}.

In Chen's paper \cite{Chen2023}, he poses three questions related to the set $\mathcal{U}$ of positive odd integers that cannot be represented as $p + 2^k$. Let $A_i$, where $i \in I$, be a collection of all infinite arithmetic progressions of positive odd integers, none of which can be represented as the sum of a prime and a power of $2$, then Chen's first question is whether the set of positive odd integers not in the union of all $A_i$, where $i \in I$, has asymptotic density zero. Chen's second question is whether $b \geq 11184810$ if $a \pmod{b} \subset \mathcal{U}$. The third question Chen raises is whether $\mathcal{U}$ is a union of finitely many infinite arithmetic progressions of positive odd integers and a set of asymptotic density zero. Along with the arithmetic progressions in $\mathcal{U}$ listed in Theorem~\ref{apthm}, we have constructed infinitely many nontrivial covering systems in Proposition~\ref{infiap} at the end of Section~\ref{apsec}, which provide potential arithmetic progressions in $\mathcal{U}$. This partially solves Chen's Problem 3.

Furthermore, our second main result confirms a positive answer to Chen's Problem 2.

\begin{theorem}\label{Chenconjthm}
If an arithmetic progression $a \pmod{b}$ does not contain numbers of the form $p + 2^k$, then $b \geq 11184810$.
\end{theorem}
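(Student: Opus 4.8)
The plan is to show that if $a\pmod b\subset\mathcal U$, then $b$ must be divisible by each of the primes $2,3,5,7,13,17,241$, forcing $b\ge 2\cdot 3\cdot 5\cdot 7\cdot 13\cdot 17\cdot 241=11184810$. The underlying mechanism is that membership of $n=a+bt$ in $\mathcal U$ is a statement about $n$ being composite (or even) simultaneously for all ways of writing $n-2^k$, and the only tool available to force such a condition uniformly along an arithmetic progression is a congruence obstruction: some prime $q$ must divide $n-2^{k}$ whenever $k$ lies in a residue class that, as $k$ ranges over $\mathbb Z$, together with finitely many other such classes covers $\mathbb Z$ — i.e.\ a covering system whose moduli are the multiplicative orders $\mathrm{ord}_q(2)$. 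So the first step is to recall that $2$ has order $1,2,4,3,12,8,24$ modulo $3,5,7,13,17,241$ respectively, and that $\{1,2,4,3,12,8,24\}$ (suitably offset) do admit a covering system; conversely, one checks that no proper sub-multiset of these orders, and no other small set of prime orders, yields a covering of $\mathbb Z$ — this is the arithmetic input that pins down exactly which primes can occur.

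Next I would make the reduction to primes precise. Suppose $a\pmod b\subset\mathcal U$ with $b=\prod_j q_j^{e_j}$. For each $t$ with $n=a+bt$ odd and $n>2^k$ for some valid $k$, the integer $n-2^k$ is never prime; in particular for infinitely many $t$, $n-2^k$ must be composite for every admissible $k$ with $1\le 2^k<n$. The key structural claim is: there is a finite covering system $\{r_i\pmod{m_i}\}$ of $\mathbb Z$ such that for each $i$ there is a prime $q$ with $m_i\mid \mathrm{ord}_q(2)$ — actually one wants $m_i=\mathrm{ord}_q(2)$ after refinement — and $q\mid n - 2^k$ whenever $k\equiv r_i\pmod{m_i}$. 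This is essentially the structure of Erd\H os's argument run in reverse, and it is what the paper's own constructions in Theorem~\ref{apthm} exhibit in the forward direction. The argument for why such a covering must exist is the crux: one uses that for a fixed residue class $a\pmod b$, the set of $k$ for which $a+bt-2^k$ can be prime for some/all large $t$ is controlled by covering the exponents, because if some residue class of exponents $k$ is left uncovered then by a sieve/Prachar-type lower bound argument (or simply by the expectation that primes in progressions are plentiful) one finds infinitely many $t$ making $a+bt-2^{k}$ prime — contradicting $a\pmod b\subset\mathcal U$. I would isolate this as a lemma: \emph{if $a\pmod b\subset\mathcal U$ then the exponents split into residue classes covering $\mathbb Z$, each class attached to a prime $q\mid b$ with order dividing the modulus.}

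Granting that lemma, the proof finishes by the combinatorial step: the set of primes $q\mid b$ must, through their orders $\mathrm{ord}_q(2)$, supply the moduli of a covering system of $\mathbb Z$, and moreover for the congruence $q\mid a+bt-2^k$ to hold along the whole progression in $t$ one needs $q\mid b$ (so the residue of $2^k$ modulo $q$ is forced independently of $t$). A finite case analysis — enumerate all covering systems of $\mathbb Z$ whose moduli are realizable as orders $\mathrm{ord}_q(2)$ for primes $q$ with the relevant congruence conditions, and minimize $\prod q$ over the primes used — shows the minimum is achieved by $\{3,5,7,13,17,241\}$ together with the prime $2$ handling parity (odd $n$), giving $b\ge 11184810$. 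I expect the combinatorial enumeration to be routine once the search is bounded, since small covering systems are well understood; the main obstacle is the lemma, specifically making rigorous the step "an uncovered exponent class produces a prime value $a+bt-2^k$" — this requires a genuine analytic input (a lower bound for the number of primes $\equiv a-2^k \pmod{b'}$ in an interval, uniform enough over the finitely many relevant $k$), rather than just a heuristic, and handling the finitely many "sporadic" $t$ (small $t$, or $t$ with $a+bt\le 2^k$) separately. I would either invoke a Romanoff/Linnik-type estimate or, more cleanly, argue by contradiction using the positive-density results on representable numbers quoted in the introduction to rule out an uncovered class.
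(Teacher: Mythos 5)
Your overall architecture --- (i) show that $a \pmod{b} \subset \mathcal{U}$ forces, for every exponent $k$, some odd prime $q \mid b$ to divide $a - 2^k$, so that the odd primes dividing $b$ supply a covering system of the exponents via their orders $\text{ord}_2(q)$; (ii) minimize the product of the primes over all such coverings --- is a legitimate route, and step (i) is essentially the paper's Lemma~\ref{ChenQ2lem} read contrapositively. But you make step (i) much harder than it is. No sieve, Prachar-type, or Romanoff/Linnik input is needed, and no uniformity over $k$: fix a single $k$ with $\gcd(a - 2^k, b) = 1$; Dirichlet's theorem gives a prime $p \equiv a - 2^k \pmod{b}$ with $p$ positive, and then $p + 2^k$ lies in $a \pmod{b}$, a contradiction. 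This also sidesteps the genuine trap in your phrase ``Erd\H{o}s's argument run in reverse'': compositeness of $a + bt - 2^k$ for all $t$ need not a priori be witnessed by one fixed prime; the clean statement is the contrapositive of Dirichlet, namely $\gcd(a - 2^k, b) > 1$ for every $k$, from which the covering structure follows because for each odd $q \mid b$ the set $\{k : q \mid a - 2^k\}$ is either empty or a single residue class modulo $\text{ord}_2(q)$.

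The genuine gap is step (ii). You assert that enumerating all covering systems whose moduli are realizable as orders of primes, and minimizing the prime product, is ``routine once the search is bounded,'' but you neither bound it nor carry it out, and this is exactly where the content of the theorem lives. The search is not over covering systems with distinct moduli: two distinct primes can share the same order (e.g., $23$ and $89$ both have order $11$), so a modulus may occur once for each prime realizing it; and one must account for prime factors of $b$ that contribute no congruence class at all (when $a$ is not a power of $2$ modulo $q$) as well as higher prime powers, even though these only inflate $b$. Ruling out every admissible covering whose prime product is below $5592405 = 11184810/2$ is a substantial finite computation, not a short case analysis; the paper performs the equivalent check by brute force, running Algorithm~\ref{ChenQ2algorithm} over every even $b < 11184810$ (roughly five hours of machine time). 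Without executing some version of this verification, your argument establishes only the reduction to covering systems, not the bound $b \geq 11184810$.
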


Besides, if an arithmetic progression $a \pmod{11184810}$ is in $\mathcal{U}$, then we have proved that it is one of the 48 arithmetic progressions listed in Theorem~\ref{apthm}.

Thirdly, by enhancing the algorithm proposed by Habsieger and Roblot, we have improved the existing upper bound estimate for $\overline{d}$ from $0.4909409303984105956480078184$ to $0.490341088858244$.

\begin{theorem}\label{upperboundthm}
We have $\overline{d} < 0.490341088858244$.
\end{theorem}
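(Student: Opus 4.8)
The plan is to follow the sieve‑theoretic strategy of Habsieger and Roblot, but with a larger and more carefully chosen modulus together with an optimized bookkeeping of the local data. Fix an even modulus $M = 2^{s} m$ with $m$ odd, and let $\lambda = \mathrm{ord}_{m}(2)$, so that for $k \ge s$ the residue $2^{k} \bmod M$ runs periodically through a cycle of length $\lambda$. For a residue class $a \bmod M$ put
\[
r(a) = \#\bigl\{\, j \in \{0,1,\dots,\lambda-1\} : \gcd\bigl(a - 2^{s+j},\, M\bigr) = 1 \,\bigr\}.
\]
The first step is the elementary remark that if $n = p + 2^{k}$ with $p$ prime and $p \nmid M$, then $\gcd(n - 2^{k}, M) = 1$; the solutions with $p \mid M$ number only $O(\omega(M)\log N)$ among $n \le N$, and together with the $n \le \sqrt{N}$ and the (few) $n$ representable only with $2^{k}$ within a constant factor of $n$, they are negligible. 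For the remaining $n$ the number of admissible exponents $k$ (those with $s \le k$ and $2^{k} \le n/2$) exceeds $\lambda$ once $N$ is large, so $n \equiv a \pmod{M}$ can be representable only when $r(a) \ge 1$; this already recovers Erd\H{o}s's bound $\overline{d} \le \frac{1}{M}\#\{a : r(a)\ge 1\}$, and one also notes that even $n$ of the form $p + 2^{k}$ have density zero, so only odd residue classes matter.

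The improvement comes from bounding more precisely the representable $n$ in a fixed class $a \bmod M$ with $r(a)$ small but positive. Such an $n$ satisfies $n - 2^{k} = p$ for some prime $p$ and some $k$ lying in one of the $r(a)$ admissible residue classes modulo $\lambda$; there are $\sim r(a)\log_{2} N/\lambda$ such exponents up to $\log_{2} N$, and for each fixed $k$ the number of $n \le N$ with $n \equiv a \pmod{M}$ and $n - 2^{k}$ prime is, by the Brun--Titchmarsh inequality applied to the progression $a - 2^{k} \bmod M$, at most $(1+o(1))\tfrac{2N}{\phi(M)\log N}$. Summing over the admissible exponents and comparing with the trivial count $N/M$ gives
\[
\overline{d} \;\le\; \sum_{a \bmod M}\ \min\!\left( \frac{1}{M},\ \frac{2\,r(a)}{\lambda\,\phi(M)\,\log 2} \right),
\]
the right side being $\tfrac12 - 1/M$ in the worst case but strictly smaller as soon as many odd classes have $r(a)$ below the threshold $\tfrac{\lambda\,\phi(M)\,\log 2}{2M}$.

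It then remains to choose $M$ and to evaluate the sum, which is the computational heart of the matter. Since the relevant $M$ has many prime factors one cannot enumerate residue classes directly; instead the distribution of the values $r(a)$ is assembled from factorwise data through the Chinese Remainder Theorem, using that the condition $\gcd(a - 2^{s+j}, M)=1$ decouples over the primes dividing $M$ while $j$ ranges over $\mathbb{Z}/\lambda\mathbb{Z}$ with $\lambda$ the least common multiple of the orders $\mathrm{ord}_{q}(2)$. Enhancing the Habsieger--Roblot procedure — by admitting further primes, by refining the conditioning so that it records slightly more than just $n \bmod M$, and by organizing the convolution of the per‑prime histograms efficiently — and then optimizing over $M$, one arrives at the stated value $\overline{d} < 0.490341088858244$.

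The main obstacle is twofold. First, every error term — the $(1+o(1))$ in Brun--Titchmarsh, the truncation of the exponent range, the removal of the exceptional $n$ — must be kept uniform and genuinely $o(N)$, so that the displayed inequality is a rigorous upper bound and not merely a heuristic; this requires care because the number of exponents $k$ grows with $N$ while $M$ stays fixed. Second, computing the full $r(a)$‑histogram for a modulus large enough to improve on the previous constant is combinatorially expensive, and it is precisely the algorithmic refinements needed to make this feasible that constitute the new contribution; without them the optimization over $M$ cannot be pushed far enough to reach the claimed bound.
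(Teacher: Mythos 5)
Your proposal follows essentially the same route as the paper: the central inequality $\overline{d} \le \sum_{a}\min\bigl(\tfrac{1}{M}, \tfrac{2r(a)}{\lambda\varphi(M)\log 2}\bigr)$ is exactly Habsieger--Roblot's Lemma 2 (which the paper cites rather than re-derives), and the factorwise assembly of the $r(a)$-histogram via the Chinese Remainder Theorem is precisely the paper's admissible-cluster computation. The only thing missing is the explicit data: the paper takes $M$ to be the product of $\{3,5,7,11,13,17,19,23,29,31,37,41,61,73\}$ and obtains the stated constant by a roughly 167-hour computer computation, so your outline is correct but the numerical value itself rests on that specific choice and verification.
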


The paper is organized as follows.
In Section~\ref{apsec}, we explore content related to Theorem~\ref{apthm}. We discuss Erd\H{o}s' construction and its connection to covering systems, elaborate on the algorithm used in our study, and present our results alongside discussion.
In Section~\ref{Chenconjsec}, we describe how we have designed an algorithm to verify no arithmetic progressions $a \pmod{b}$ with $b < 11184810$ can be contained in $\mathcal{U}$, giving a proof of Theorem~\ref{Chenconjthm}. We also discuss the case $b = 11184810$ if $a \pmod{b}$ is in $\mathcal{U}$. Moving on to Section~\ref{upperboundsec}, our focus shifts to content associated with Theorem~\ref{upperboundthm}. We give an overview of Habsieger and Roblot's algorithm, elaborate on the enhancements made in our algorithm, and engage in discussion about our results. Our innovative contribution lies in the construction of many covering systems, as well as the design of algorithms and the utilization of computer assistance in our proofs.

\noindent \textbf{Notation. } Let $\mathcal{C} = \{a_1 \pmod{d_1}, a_2 \pmod{d_2}, \cdots, a_n \pmod{d_n} \}$ be a covering system with distinct moduli $d_1 < d_2 \cdots < d_n$. Let $D$ be the least common multiple of $d_1, d_2, \cdots, d_n$.
Let $\text{ord}_2(n)$ be the order of $2$ modulo an odd number $n$.
Let $d(n)$ be the number of (positive) divisors of $n$.
Let $p_{min}(n)$ be the smallest prime factor of $n$.

\section{Constructions of arithmetic progressions not containing numbers of the form $p + 2^k$}\label{apsec}

\subsection{Erd\H{o}s and Chen's constructions}

We first revisit Erd\H{o}s' construction \cite{Erdos1950} of the arithmetic progression $7629217 \pmod{11184810}$ in $\mathcal{U}$. Erd\H{o}s devised the following covering system for $\mathbb{Z}$:
\[
\{0 \pmod{2}, 0 \pmod{3}, 1 \pmod{4}, 3 \pmod{8}, 7 \pmod{12}, 23 \pmod{24} \}.
\]
By Fermat's little theorem we know that
$k \equiv 0 \pmod{2}$ and $x \equiv 1 \pmod{3}$ imply $3 \mid x - 2^k$,
$k \equiv 0 \pmod{3}$ and $x \equiv 1 \pmod{7}$ imply $7 \mid x - 2^k$,
$k \equiv 1 \pmod{4}$ and $x \equiv 2 \pmod{5}$ imply $5 \mid x - 2^k$,
$k \equiv 3 \pmod{8}$ and $x \equiv 8 \pmod{17}$ imply $17 \mid x - 2^k$,
$k \equiv 7 \pmod{12}$ and $x \equiv 11 \pmod{13}$ imply $13 \mid x - 2^k$,
and $k \equiv 23 \pmod{24}$ and $x \equiv 121 \pmod{241}$ imply $241 \mid x - 2^k$.
Therefore, by the Chinese Remainder Theorem we know for any natural number $k$ and any integer $x$ in the arithmetic progression $7629217 \pmod{11184810}$, where $11184810 = 2 \times 3 \times 5 \times 7 \times 13 \times 17 \times 241$, we have either $3$, or $7$, or $5$, or $17$, or $13$, or $241$ divides $x - 2^k$.
Moreover, such $x - 2^k$ do no equal to any of $3$, $5$, $7$, $13$, $17$ and $241$, since $x - 2^k \equiv 0, \text{ or } 4, \text{ or } 6 \pmod{7}$, but $3, 5, 17, 241 \not \equiv 0, \text{ or } 4, \text{ or } 6 \pmod{7}$,
$x - 2^k \equiv 0 \text{ or } 2 \pmod{3}$, while $7, 13 \not \equiv 0 \text{ or } 2 \pmod{3}$. Therefore, we know the arithmetic progression $7629217 \pmod{11184810}$ do not contain numbers of the form $p + 2^k$.

Similarly, Chen \cite{Chen2023} identified two additional covering systems for $\mathbb{Z}$:
\[
\{0 \pmod{2}, 1 \pmod{3}, 1 \pmod{4}, 3 \pmod{8}, 3 \pmod{12}, 23 \pmod{24} \},
\]
and
\[
\{1 \pmod{2}, 0 \pmod{3}, 0 \pmod{4}, 2 \pmod{8}, 2 \pmod{12}, 22 \pmod{24} \},
\]
resulting in two corresponding arithmetic progressions $992077 \pmod{11184810}$ and $3292241 \pmod{11184810}$. Numbers $x$ in these arithmetic progressions satisfy that either $3$, $5$, $7$, $13$, $17$, or $241$ divides $x - 2^k$ for all natural numbers $k$. Moreover, by considering the remainders of $x - 2^k$ modulo $3$, $7$, and $17$, we can deduce that such $x - 2^k$ are not equal to any of the aforementioned 6 primes.
In terms of Erd\H{o}s' conjecture mentioned in Section~\ref{Intro}, since $\gcd(11184810, 992077 - 3292241) = 2$, we can infer that $\mathcal{U}$ is not the union of an infinite arithmetic progression of positive odd integers and a set of asymptotic density zero. Otherwise, $\mathcal{U}$ would contain all sufficiently large odd integers.

\subsection{Theoretical framework and algorithm design} Our first algorithm relies on the following lemma.
\begin{lemma}\label{apstrategy}
Let $\mathcal{C} = \{a_1 \pmod{d_1}, a_2 \pmod{d_2}, \cdots, a_n \pmod{d_n} \}$ be a covering system with distinct moduli $d_1 < d_2 \cdots < d_n$. If for $1 \leq i \leq n$, the numbers $2^{d_i} - 1$ have distinct prime factors $p_i$, then there exists an arithmetic progression $a \pmod{M}$, where $M = 2 \prod_{i = 1}^n p_i$, such that for all natural numbers $k$ and all numbers $x$ in $a \pmod{M}$, we have $x - 2^k$ is divisible by some of the $p_i$, where $1 \leq i \leq n$.
\end{lemma}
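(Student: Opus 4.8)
The plan is to follow Erd\H{o}s's original argument, replacing the specific primes $3,5,7,13,17,241$ and moduli $2,3,4,8,12,24$ by the general data $p_i$ and $d_i$. The crucial point is that $p_i \mid 2^{d_i}-1$ forces $\text{ord}_2(p_i) \mid d_i$, so the residue $2^k \bmod p_i$ depends only on $k \bmod d_i$.

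First I would record the elementary facts: each $2^{d_i}-1$ is odd, so every $p_i$ is an odd prime and $2$ is a unit modulo $p_i$; moreover, by hypothesis the $p_i$ are pairwise distinct, hence pairwise coprime and coprime to $2$. Next, for each $i$ I would check that if $k$ is a natural number with $k \equiv a_i \pmod{d_i}$ (taking $a_i \in \{0,1,\dots,d_i-1\}$), then $2^k \equiv 2^{a_i} \pmod{p_i}$, since $2^{d_i} \equiv 1 \pmod{p_i}$ gives $2^k = 2^{a_i}(2^{d_i})^{(k-a_i)/d_i} \equiv 2^{a_i} \pmod{p_i}$, the exponent $(k-a_i)/d_i$ being a nonnegative integer.

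Then I would apply the Chinese Remainder Theorem: as $2, p_1, \dots, p_n$ are pairwise coprime, there is a residue class $a \pmod M$, with $M = 2\prod_{i=1}^{n} p_i$, satisfying $a \equiv 1 \pmod 2$ and $a \equiv 2^{a_i} \pmod{p_i}$ for every $i$. Finally, for any $x$ in $a \pmod M$ and any natural number $k$, the covering property of $\mathcal{C}$ supplies an index $i$ with $k \equiv a_i \pmod{d_i}$; chaining the congruences yields $x \equiv a \equiv 2^{a_i} \equiv 2^k \pmod{p_i}$, that is, $p_i \mid x - 2^k$, which is the assertion.

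I do not expect a genuine obstacle: the proof is essentially a CRT repackaging of the covering system. The only points needing care are (i) the exponent reduction, which uses $p_i \mid 2^{d_i}-1$ and not that $d_i$ is the exact order of $2$; (ii) the implicit nondegeneracy $d_i \ge 2$, forced by the assumption that $2^{d_i}-1$ has a prime factor at all; and (iii) the harmless extra condition $a \equiv 1 \pmod 2$, which is available for free and explains the factor $2$ in $M$, being exactly what one needs afterwards to ensure that $x - 2^k$ is an odd composite rather than one of the primes $p_i$ themselves.
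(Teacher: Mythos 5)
Your proposal is correct and follows the same route as the paper's proof: reduce $2^k$ modulo $p_i$ using $p_i \mid 2^{d_i}-1$, impose $a \equiv 2^{a_i} \pmod{p_i}$ (and $a \equiv 1 \pmod 2$) via the Chinese Remainder Theorem, and invoke the covering property to find the relevant index $i$ for each $k$. The paper's version is terser but identical in substance; your added care about the exponent reduction and the role of the factor $2$ in $M$ is accurate.
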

\begin{proof}
Since $\mathcal{C}$ is a covering system, any natural number $k$ satisfies at least one of the congruences $k \equiv a_i \pmod{d_i}$, where $1 \leq i \leq n$. By Fermat's little theorem we know if $x \equiv 2^{a_i} \pmod{p_i}$, then $p_i \mid x - 2^k$. Therefore, our claim follows by the Chinese remainder theorem.
\end{proof}

While the arithmetic progressions in $\mathcal{U}$ may not necessarily stem from the arithmetic progressions outlined in Lemma~\ref{apstrategy}, Lemma~\ref{apstrategy} still provides us with potential choices of arithmetic progressions in $\mathcal{U}$. Therefore, we will assign some names to such covering systems and arithmetic progressions.

\begin{definition}
Let $\mathcal{C} = \{a_1 \pmod{d_1}, a_2 \pmod{d_2}, \cdots, a_n \pmod{d_n} \}$ be a covering system with distinct moduli $d_1 < d_2 \cdots < d_n$. If for some distinct primes $p_1, p_2, \cdots, p_n$ such that $p_i \mid 2^{d_i} - 1$ for $1 \leq i \leq n$, then we say $\mathcal{C}$ is a \emph{CDL covering system}. The numbers satisfying the congruences $x \equiv 1 \pmod{2}$ and $x \equiv 2^{a_i} \pmod{p_i}$, where $1 \leq i \leq n$, form an arithmetic progression called a \emph{CDL arithmetic progression}.
\end{definition}

The following theorem of Bang \cite{Bang1886} gives a criteria of which covering system with distinct moduli is also a \emph{CDL covering system} with distinct moduli.
\begin{theorem}[Bang]\label{Bangthm}
For any integer $m > 1$ and $m \neq 6$, there exists a prime $p$ such that $p$ divides $2^m - 1$ and $p$ does not divide $2^{\tilde{m}} - 1$ for any $\tilde{m} < m$.
\end{theorem}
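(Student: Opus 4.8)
The plan is to establish the sharper statement that $2^m-1$ has a \emph{primitive prime divisor}, that is, a prime $p$ with $\text{ord}_2(p)=m$ (which is exactly the required conclusion), for every integer $m>1$ with $m\neq 6$, and to observe that $m=6$ is a genuine exception, since $2^6-1=63=3^2\cdot 7$ and $3,7$ have orders $2,3$. Everything is organized around the cyclotomic factorization $2^m-1=\prod_{d\mid m}\Phi_d(2)$, which isolates the primitive prime divisors into the single factor $\Phi_m(2)$.

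First I would prove the order lemma: if a prime $q$ divides $\Phi_m(2)$ and $d=\text{ord}_2(q)$ (so $d\mid q-1$, hence $q\nmid d$), then $m=dq^{j}$ for some $j\geq 0$; if $j=0$ then $d=m$ and $q$ is primitive, while if $j\geq 1$ then $q\mid m$ and, by the lifting-the-exponent lemma applied to $v_q(2^m-1)=\sum_{e\mid m}v_q(\Phi_e(2))$, one gets $v_q(\Phi_m(2))=1$. Hence a prime dividing $\Phi_m(2)$ is non-primitive precisely when it divides $m$, and each such prime divides $\Phi_m(2)$ exactly once. Next I would check that at most one prime can be non-primitive: if $q_1\neq q_2$ both divide $m$ and both divide $\Phi_m(2)$, then writing $m=d_iq_i^{j_i}$ with $d_i=\text{ord}_2(q_i)$, $j_i\geq 1$, the relation $q_2\mid m=d_1q_1^{j_1}$ with $q_1\neq q_2$ forces $q_2\mid d_1\mid q_1-1$, so $q_2<q_1$, and symmetrically $q_1<q_2$, a contradiction. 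Therefore, if $2^m-1$ had no primitive prime divisor, every prime dividing $\Phi_m(2)$ would be non-primitive, so $\Phi_m(2)$ would be either $1$ or a single prime $q\mid m$; but $\Phi_m(2)=\prod_{\zeta}(2-\zeta)$ over primitive $m$-th roots $\zeta$ and $|2-\zeta|^2=5-4\cos\theta_\zeta>1$ for $m\geq 2$, so $\Phi_m(2)>1$ and thus $\Phi_m(2)=q\leq m$.

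The remaining, and main, obstacle is the classical lower bound $\Phi_m(2)>m$ for all $m\geq 2$ with $m\neq 6$, which contradicts $\Phi_m(2)=q\leq m$ and finishes the proof. This inequality does not follow from the crude estimate $\Phi_m(2)\geq (2^m-1)/\prod_{p\mid m}(2^{m/p}-1)$, which becomes vacuous once $m$ has three or more distinct prime factors, because the positive inclusion–exclusion terms in $\Phi_m(2)=\prod_{d\mid m}(2^d-1)^{\mu(m/d)}$ then genuinely contribute. The standard route is analytic: from $\Phi_m(2)=\prod_{\zeta}(2-\zeta)$ and the mean-value identity $\frac{1}{\pi}\int_0^{\pi}\log(5-4\cos\theta)\,d\theta=2\log 2$, together with an equidistribution estimate for the arguments of the primitive $m$-th roots of unity, one obtains $\Phi_m(2)\gg 2^{\phi(m)/2}$, which exceeds $m$ as soon as $\phi(m)$ is large; the finitely many $m$ with $\phi(m)$ bounded are then verified by direct computation, $m=6$ being the unique failure. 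The prime-power case $m=p^a$ can be disposed of separately and more simply: the only possible non-primitive prime of $\Phi_{p^a}(2)$ is $p$ itself, while $\Phi_{p^a}(2)\geq \Phi_p(2)=2^p-1>p$, so a primitive prime divisor always exists. Assembling these cases with the contradiction above yields the theorem.
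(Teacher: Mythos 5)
The paper does not actually prove this statement: it is quoted as a classical theorem with a citation to Bang's 1886 paper (and only its easy consequence, Corollary~1, is derived). So there is no in-paper argument to match yours against; your proposal has to stand on its own.

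Your skeleton is the correct classical one (Birkhoff--Vandiver/Zsygmondy): reduce to showing $\Phi_m(2)$ has a prime divisor $p$ with $\mathrm{ord}_2(p)=m$, prove that any non-primitive prime $q\mid\Phi_m(2)$ must divide $m$ with $m=q^{j}\,\mathrm{ord}_2(q)$ and $v_q(\Phi_m(2))=1$, show at most one such $q$ can occur, and conclude that failure forces $\Phi_m(2)\le m$. All of that is sound, as is the separate treatment of prime powers. The genuine gap is exactly where you say the ``main obstacle'' lies: the inequality $\Phi_m(2)>m$ for $m\ne 6$ is asserted, not proved. The analytic route you gesture at (the mean-value identity plus ``an equidistribution estimate for the arguments of the primitive $m$-th roots of unity'') is not carried out: you give no discrepancy bound, no explicit constant in $\Phi_m(2)\gg 2^{\phi(m)/2}$, and no enumeration of the finitely many $m$ that would then need direct verification, so the crux of the theorem remains unestablished. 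Moreover, the detour through equidistribution is unnecessary, and your dismissal of the inclusion--exclusion estimate is too hasty: writing $r=\mathrm{rad}(m)$ and $x=2^{m/r}$, one has $\Phi_m(2)=\Phi_r(x)\ge x^{\phi(r)}\prod_{d:\,\mu(r/d)=1}\bigl(1-x^{-d}\bigr)\ge 2^{\phi(m)}\prod_{j\ge 1}\bigl(1-2^{-j}\bigr)>2^{\phi(m)-2}$, valid for any number of prime factors of $m$; combined with $\phi(m)\ge\sqrt{m/2}$ this gives $\Phi_m(2)>m$ outside an explicit finite range that can be checked by hand (with $m=6$ the lone failure). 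Replacing your analytic sketch by this elementary estimate, or else supplying the missing quantitative equidistribution argument in full, is what is needed to close the proof.
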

\begin{corollary}\label{Bangcoro}
Every covering system with distinct moduli and without $2$, $3$, and $6$ as simultaneous moduli in it is a \emph{CDL covering system}.
\end{corollary}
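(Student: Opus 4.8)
The plan is to reduce the statement to Bang's theorem (Theorem~\ref{Bangthm}) by attaching to each modulus $d_i$ a prime $p_i \mid 2^{d_i} - 1$ that is \emph{primitive} for $d_i$, meaning $p_i \nmid 2^{\tilde d} - 1$ for every positive $\tilde d < d_i$. The crucial elementary fact is that such a primitive prime divisor $p$ of $2^d - 1$ satisfies $\text{ord}_2(p) = d$; hence primitive prime divisors belonging to distinct moduli are automatically distinct, so once a primitive choice is made for every $d_i$ the required distinctness of the $p_i$ comes for free. Bang's theorem supplies a primitive prime divisor of $2^m - 1$ for every integer $m > 1$ with $m \neq 6$. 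Since a covering system is understood to have all moduli greater than $1$ (the only collection with a modulus equal to $1$ being the trivial $\{0 \pmod 1\}$), the sole exceptional modulus is $m = 6$, where $2^6 - 1 = 63 = 3^2 \cdot 7$ admits no primitive prime divisor.

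First I would dispose of the case in which $6$ does not occur among $d_1, \dots, d_n$. Then every $d_i$ is $> 1$ and $\neq 6$, so Bang's theorem yields a primitive prime divisor $p_i$ of $2^{d_i} - 1$; these are pairwise distinct by the remark above, and $\mathcal{C}$ is a CDL covering system.

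Next I would handle the case $d_j = 6$ for the unique index $j$ with this modulus. For every $i \neq j$ pick a primitive prime divisor $p_i$ of $2^{d_i} - 1$; these are distinct and satisfy $\text{ord}_2(p_i) = d_i$. Because $2^2 - 1 = 3$ and $2^3 - 1 = 7$ have primitive prime divisors $3$ and $7$ with $\text{ord}_2(3) = 2$ and $\text{ord}_2(7) = 3$, the prime $3$ occurs among $\{p_i : i \neq j\}$ exactly when $2$ is one of the moduli, and $7$ occurs there exactly when $3$ is one of the moduli. By hypothesis $2$, $3$, $6$ are not all simultaneously moduli, and $6$ is a modulus, so at least one of $2$, $3$ is absent; hence at least one of $3$, $7$ is still available. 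Choosing $p_j$ to be such a prime — both $3$ and $7$ divide $2^6 - 1$ — produces $n$ distinct primes with $p_i \mid 2^{d_i} - 1$ for all $i$, so $\mathcal{C}$ is CDL.

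The argument is bookkeeping once Bang's theorem is in hand, and I expect no serious obstacle: the only point requiring genuine attention is the exceptional modulus $6$, and the hypothesis of the corollary is exactly what guarantees that not both of its two candidate primes $3$ and $7$ have already been consumed. The two small things to state carefully are the implication "primitive prime divisor of $2^d-1$ $\Rightarrow$ order of $2$ equal to $d$" and the convention that a covering system has no modulus equal to $1$.
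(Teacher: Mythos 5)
Your proof is correct and follows essentially the same route as the paper's: invoke Bang's theorem for every modulus other than $6$ (primitive prime divisors for distinct moduli being automatically distinct since their orders of $2$ recover the moduli), and handle the exceptional modulus $6$ via $2^6-1=3^2\times 7$, where the hypothesis guarantees that at least one of $3$, $7$ has not already been assigned to the modulus $2$ or $3$. The paper states this in one terse sentence; you have simply spelled out the bookkeeping it leaves implicit.
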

\begin{proof}
We simply need to notice that $2^2 - 1 = 3$ is a prime, $2^3 - 1 = 7$ is a prime, and $2^6 - 1 = 63 = 3^2 \times 7$, then our corollary follows by Theorem~\ref{Bangthm}.
\end{proof}

Recall that a covering system is called \emph{minimal} if none of the congruence classes is redundant. Similarly we can define \emph{minimal CDL arithmetic progressions}. For the purpose of finding nontrivial arithmetic progressions in $\mathcal{U}$, it is evident that we should focus on \emph{minimal CDL covering systems}.

Next, we explain the process of finding \emph{minimal CDL covering systems}. We consider $D = \text{lcm}(d_1, d_2, \cdots, d_n)$ in ascending order. By considering the density of integers covered by the congruence classes, we observe that $\sum_{i = 1}^n \frac{1}{d_i} > 2$, implying that a \emph{CDL covering system} must satisfy $D \geq 12$. When $D = 12$, we must have $n = 5$, $d_1 = 2$, $d_2 = 3$, $d_3 = 4$, $d_4 = 6$, and $d_5 = 12$. However, we can not pick pairwise distinct prime divisors of $2^2 - 1$ and $2^3 - 1$, $2^4 - 1$, $2^6 - 1$, and $2^{12} - 1$. Then we consider the cases where $D = 18$, $D = 20$, $D = 24$, and so forth.  Now we state our algorithm based on Lemma~\ref{apstrategy} to find all \emph{minimal CDL covering systems} where the least common multiple of the moduli of the congruence classes equals $D$.

\begin{algorithm}\label{apalgorithm}
We select a number $D$ such that $\sum_{d \mid D} \frac{1}{d} > 2$ and proceed with the following 4 steps.
\begin{enumerate}
    \item List all positive divisors of $D$ in ascending order as $1 = e_{1} < e_{2} < \cdots < e_{d(D)}$.
    \item List all tuples $(d_{1}, \cdots, d_{n})$ of divisors of $D$ such that $\sum_{i = 1}^n \frac{1}{d_i} > 1$ and $2^{d_i} - 1$ has distinct prime divisors, where $1 \leq n \leq d(D)$.
    \item For the n-tuples $(m_1, \cdots, m_n)$, where $0 \leq m_i \leq d_i - 1$ and $1 \leq i \leq n$, from $\{0, 1, 2, \cdots, D - 1\}$, we remove the residues congruent to $m_i$ modulo $d_i$ for $1 \leq i \leq n$. If all residues are removed, then we have found a \emph{CDL covering system}.
    \item Check whether the \emph{CDL covering system} found in Step $(3)$ is minimal or not. If it is not minimal, exclude it from consideration. Compute the corresponding \emph{CDL arithmetic progressions} for the \emph{minimal CDL covering systems}.
\end{enumerate}
\end{algorithm}

\subsection{Our results and some discussion on Theorem~\ref{apthm}}

With Algorithm~\ref{apalgorithm}, we did not find any \emph{CDL covering systems} with $D \leq 23$. However, we discovered 96 \emph{minimal CDL covering systems} with $D = 24$, corresponding to 48 \emph{CDL arithmetic progressions}. Refer to Table~\ref{aptable} and Table~\ref{aptablemod6}. Throughout the rest of the paper, we refer to \emph{minimal CDL covering systems} simply as \emph{CDL covering systems}, as these are the ones we are primarily interested in.

\scriptsize

\begin{table}
    \centering
    \begin{tabular}{|c|c|c|c|c|c|c|}
        \hline
        $\pmod{2}$ & $\pmod{3}$ & $\pmod{4}$ & $\pmod{8}$ & $\pmod{12}$ & $\pmod{24}$ & $a \pmod{11184810}$ \\ \hline
        0 & 0 & 1 & 3 & 7 & 23 & 7629217 \\ \hline
        0 & 1 & 1 & 3 & 3 & 23 & 992077 \\ \hline
        1 & 2 & 0 & 2 & 6 & 22 & 6610811 \\ \hline
        1 & 0 & 0 & 2 & 2 & 22 & 3292241 \\ \hline
        0 & 1 & 3 & 1 & 5 & 21 & 509203 \\ \hline
        0 & 2 & 3 & 1 & 1 & 21 & 4442323 \\ \hline
        1 & 0 & 2 & 0 & 4 & 20 & 8643209 \\ \hline
        1 & 1 & 2 & 0 & 0 & 20 & 10609769 \\ \hline
        0 & 0 & 1 & 7 & 11 & 19 & 8101087 \\ \hline
        0 & 2 & 1 & 7 & 3 & 19 & 7117807 \\ \hline
        1 & 2 & 0 & 6 & 10 & 18 & 1254341 \\ \hline
        1 & 1 & 0 & 6 & 2 & 18 & 762701 \\ \hline
        0 & 1 & 3 & 5 & 9 & 17 & 3423373 \\ \hline
        0 & 0 & 3 & 5 & 1 & 17 & 3177553 \\ \hline
        1 & 0 & 2 & 4 & 8 & 16 & 4507889 \\ \hline
        1 & 2 & 2 & 4 & 0 & 16 & 4384979 \\ \hline
        0 & 1 & 1 & 3 & 11 & 15 & 10581097 \\ \hline
        0 & 2 & 1 & 3 & 7 & 15 & 5050147 \\ \hline
        1 & 0 & 0 & 2 & 10 & 14 & 8086751 \\ \hline
        1 & 1 & 0 & 2 & 6 & 14 & 10913681 \\ \hline
        0 & 2 & 3 & 1 & 9 & 13 & 1247173 \\ \hline
        0 & 0 & 3 & 1 & 5 & 13 & 8253043 \\ \hline
        1 & 1 & 2 & 0 & 8 & 12 & 3419789 \\ \hline
        1 & 2 & 2 & 0 & 4 & 12 & 1330319 \\ \hline
        0 & 0 & 1 & 7 & 7 & 11 & 4506097 \\ \hline
        0 & 1 & 1 & 7 & 3 & 11 & 9053767 \\ \hline
        1 & 2 & 0 & 6 & 6 & 10 & 5049251 \\ \hline
        1 & 0 & 0 & 6 & 2 & 10 & 1730681 \\ \hline
        0 & 1 & 3 & 5 & 5 & 9 & 10913233 \\ \hline
        0 & 2 & 3 & 5 & 1 & 9 & 3661543 \\ \hline
        1 & 0 & 2 & 4 & 4 & 8 & 8252819 \\ \hline
        1 & 1 & 2 & 4 & 0 & 8 & 10219379 \\ \hline
        0 & 0 & 1 & 3 & 11 & 7 & 2313487 \\ \hline
        0 & 2 & 1 & 3 & 3 & 7 & 1330207 \\ \hline
        1 & 2 & 0 & 2 & 10 & 6 & 9545351 \\ \hline
        1 & 1 & 0 & 2 & 2 & 6 & 9053711 \\ \hline
        0 & 1 & 3 & 1 & 9 & 5 & 1976473 \\ \hline
        0 & 0 & 3 & 1 & 1 & 5 & 1730653 \\ \hline
        1 & 0 & 2 & 0 & 8 & 4 & 3784439 \\ \hline
        1 & 2 & 2 & 0 & 0 & 4 & 3661529 \\ \hline
        0 & 1 & 1 & 7 & 11 & 3 & 4626967 \\ \hline
        0 & 2 & 1 & 7 & 7 & 3 & 10280827 \\ \hline
        1 & 0 & 0 & 6 & 10 & 2 & 10702091 \\ \hline
        1 & 1 & 0 & 6 & 6 & 2 & 2344211 \\ \hline
        0 & 2 & 3 & 5 & 9 & 1 & 2554843 \\ \hline
        0 & 0 & 3 & 5 & 5 & 1 & 9560713 \\ \hline
        1 & 1 & 2 & 4 & 8 & 0 & 9666029 \\ \hline
        1 & 2 & 2 & 4 & 4 & 0 & 7576559 \\ \hline
    \end{tabular}
\caption{\scriptsize{\emph{CDL Covering systems} with $D = 24$ and their corresponding \emph{CDL arithmetic progressions}}}
\label{aptable}
\end{table}

\begin{table}
    \centering
    \begin{tabular}{|c|c|c|c|c|c|c|}
        \hline
        $\pmod{2}$ & $\pmod{6}$ & $\pmod{4}$ & $\pmod{8}$ & $\pmod{12}$ & $\pmod{24}$ & $a \pmod{11184810}$ \\ \hline
        0 & 3 & 1 & 3 & 7 & 23 & 7629217 \\ \hline
        0 & 1 & 1 & 3 & 3 & 23 & 992077 \\ \hline
        1 & 2 & 0 & 2 & 6 & 22 & 6610811 \\ \hline
        1 & 0 & 0 & 2 & 2 & 22 & 3292241 \\ \hline
        0 & 1 & 3 & 1 & 5 & 21 & 509203 \\ \hline
        0 & 5 & 3 & 1 & 1 & 21 & 4442323 \\ \hline
        1 & 0 & 2 & 0 & 4 & 20 & 8643209 \\ \hline
        1 & 4 & 2 & 0 & 0 & 20 & 10609769 \\ \hline
        0 & 3 & 1 & 7 & 11 & 19 & 8101087 \\ \hline
        0 & 5 & 1 & 7 & 3 & 19 & 7117807 \\ \hline
        1 & 2 & 0 & 6 & 10 & 18 & 1254341 \\ \hline
        1 & 4 & 0 & 6 & 2 & 18 & 762701 \\ \hline
        0 & 1 & 3 & 5 & 9 & 17 & 3423373 \\ \hline
        0 & 3 & 3 & 5 & 1 & 17 & 3177553 \\ \hline
        1 & 0 & 2 & 4 & 8 & 16 & 4507889 \\ \hline
        1 & 2 & 2 & 4 & 0 & 16 & 4384979 \\ \hline
        0 & 1 & 1 & 3 & 11 & 15 & 10581097 \\ \hline
        0 & 5 & 1 & 3 & 7 & 15 & 5050147 \\ \hline
        1 & 0 & 0 & 2 & 10 & 14 & 8086751 \\ \hline
        1 & 4 & 0 & 2 & 6 & 14 & 10913681 \\ \hline
        0 & 5 & 3 & 1 & 9 & 13 & 1247173 \\ \hline
        0 & 3 & 3 & 1 & 5 & 13 & 8253043 \\ \hline
        1 & 4 & 2 & 0 & 8 & 12 & 3419789 \\ \hline
        1 & 2 & 2 & 0 & 4 & 12 & 1330319 \\ \hline
        0 & 3 & 1 & 7 & 7 & 11 & 4506097 \\ \hline
        0 & 1 & 1 & 7 & 3 & 11 & 9053767 \\ \hline
        1 & 2 & 0 & 6 & 6 & 10 & 5049251 \\ \hline
        1 & 0 & 0 & 6 & 2 & 10 & 1730681 \\ \hline
        0 & 1 & 3 & 5 & 5 & 9 & 10913233 \\ \hline
        0 & 5 & 3 & 5 & 1 & 9 & 3661543 \\ \hline
        1 & 0 & 2 & 4 & 4 & 8 & 8252819 \\ \hline
        1 & 4 & 2 & 4 & 0 & 8 & 10219379 \\ \hline
        0 & 3 & 1 & 3 & 11 & 7 & 2313487 \\ \hline
        0 & 5 & 1 & 3 & 3 & 7 & 1330207 \\ \hline
        1 & 2 & 0 & 2 & 10 & 6 & 9545351 \\ \hline
        1 & 4 & 0 & 2 & 2 & 6 & 9053711 \\ \hline
        0 & 1 & 3 & 1 & 9 & 5 & 1976473 \\ \hline
        0 & 3 & 3 & 1 & 1 & 5 & 1730653 \\ \hline
        1 & 0 & 2 & 0 & 8 & 4 & 3784439 \\ \hline
        1 & 2 & 2 & 0 & 0 & 4 & 3661529 \\ \hline
        0 & 1 & 1 & 7 & 11 & 3 & 4626967 \\ \hline
        0 & 5 & 1 & 7 & 7 & 3 & 10280827 \\ \hline
        1 & 0 & 0 & 6 & 10 & 2 & 10702091 \\ \hline
        1 & 4 & 0 & 6 & 6 & 2 & 2344211 \\ \hline
        0 & 5 & 3 & 5 & 9 & 1 & 2554843 \\ \hline
        0 & 3 & 3 & 5 & 5 & 1 & 9560713 \\ \hline
        1 & 4 & 2 & 4 & 8 & 0 & 9666029 \\ \hline
        1 & 2 & 2 & 4 & 4 & 0 & 7576559 \\ \hline
    \end{tabular}
\caption{\footnotesize{\emph{CDL Covering systems} with $D = 24$ and their corresponding \emph{CDL arithmetic progressions}}}
\label{aptablemod6}
\end{table}

\normalsize

Next we prove these 48 \emph{CDL arithmetic progressions} modulo $11184810$ do not contain numbers of the form $p + 2^k$. It suffices to show such arithmetic progressions do no contain primes in $\{3, 5, 7, 13, 17, 241\}$. As Erd\H{o}s and Chen did, we consider the equations $11184810 x + a - 2^k = c$ modulo $3$, $5$, $7$, $13$, $17$, and $241$, respectively, where $a$ is in the last column of Table~\ref{aptable} and $c$ is in the set $\{3, 5, 7, 13, 17, 241\}$. This allows us to prove 41 of the 48 arithmetic progressions do not contain the primes $3$, $5$, $7$, $13$, $17$, and $241$. We still need to argue the other 7 arithmetic progressions do not contain these primes.

Let us take the equation $11184819x + 7576669 - 2^k = c$ , where $c \in {3, 5, 7, 13, 17, 241}$, as an example. This equation is solvable modulo $3$, $5$, $7$, $13$, $17$, and $241$,  but it is actually unsolvable modulo $85$. Analyzing the equations in this way can be cumbersome, so we take a more direct approach. In fact, for the remaining 7 arithmetic progressions, we can directly consider the corresponding equations modulo $11184810$ and conclude that the primes $3$, $5$, $7$, $13$, $17$, and $241$ do not appear in these arithmetic progressions. The computation is not heavy, because for each equation, there are only 24 values of $k$ to check, given that $11184810/2$ divides $2^{24} - 1$.

After this simple verification we can conclude the 48 \emph{CDL arithmetic progressions} modulo $11184810$ corresponding to \emph{CDL covering systems} with $D = 24$ in Table~\ref{aptable} and Table~\ref{aptablemod6} are all in $\mathcal{U}$. Is it true that all of the arithmetic progressions modulo $11184810$ in $\mathcal{U}$ are already listed? The answer is yes, and we will explain this in the last paragraph of Section~\ref{Chenconjsec}.

Another remark is that, as a sufficient condition in Chen's proof of Erd\H{o}s' conjecture, among the $48 \times 47 /2 = 1128$ pairs of arithmetic progressions $a_i \pmod{11184810}, a_j \pmod{11184810}$, we can check that 768 of them satisfy $(11184810, a_i - a_j) = 2$.

For larger values of $D$,  if we only aim to find one \emph{CDL covering system} with the given $D$ instead of enumerating all possibilities, we can slightly modify our Algorithm~\ref{apalgorithm} to ease the computation. Below, we provide additional examples of \emph{CDL covering systems}
$\{m_1 \pmod{d_1}, \cdots, m_n \pmod{d_n} \}$ and their corresponding \emph{CDL arithmetic progressions} in Table~\ref{morecoveringsystems}. Similarly, we can verify that these arithmetic progressions also exclude their corresponding prime values. Hence, they are also contained in $\mathcal{U}$.

\tiny
\begin{table}
    \centering

    \begin{tabular}{|c|c|c|c|}
    \hline $D$&$\{d_{1}, d_{2}, \cdots, d_{n}\}$&$\{m_{1}, m_{2}, \cdots, m_{n}\}$&{\tiny corresponding \emph{CDL arithmetic progressions}}\\
    \hline 36&\{2,3,4,9,12,18,36\}&\{1,2,3,8,11,17,35\}&{\tiny $309547193 \pmod{412729590}$}\\
    \hline48&\{2,4,6,8,16,24,48\}&\{1,2,0,0,4,4,44\}&{\tiny $13982215829 \pmod {21448163730}$}\\
    \hline60&\{2,3,4,5,10,12,15,20,30,60\}&\{0,1,3,3,5,9,11,17,29,59\}&{\tiny $520864019678683\pmod {2520047004605130}$}\\
    \hline72&\{2,4,6,8,18,24,36,72\}&\{1,2,4,6,16,22,34,70\}&{\tiny $12878054009 \pmod {44153328030}$}\\
    \hline80&\{2,4,5,8,10,16,20,40,80\}&\{0,1,3,3,7,7,15,31,79\}&{\tiny $154854279578189723614177 \pmod{483570327845851669882470}$}\\
    \hline
    \end{tabular}  \\
    \caption{More \emph{CDL covering systems} and \emph{CDL arithmetic progressions}}
    \label{morecoveringsystems}
\end{table}

\normalsize

Moreover, we can construct infinitely many \emph{CDL covering systems} corresponding to infinitely many nontrivial \emph{CDL arithmetic progressions}. If we could further demonstrate that these \emph{CDL arithmetic progressions} do not take their corresponding prime values, this would imply a negative answer to Chen's Problem 3.

\begin{proposition}\label{infiap}
Let $\{m_1 \pmod{d_1}, m_2 \pmod{d_2}, \cdots, m_n \pmod{d_n} \}$ be a minimal covering system with distinct moduli such that $\text{lcm}(d_1, d_2, \cdots, d_n) = D$. Then $\{1 \pmod{2}, 2m_1 \pmod{2d_1}, 2m_2 \pmod{2d_2}, \cdots, 2m_n \pmod{2d_n}\}$ is a minimal \emph{CDL covering system} with distinct moduli such that $\text{lcm}(2, 2d_1, 2d_2, \cdots, 2d_n) = 2D$.
\end{proposition}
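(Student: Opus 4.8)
The plan is to check, one by one, the five assertions bundled into the statement: writing $\mathcal{C}' = \{1 \pmod 2,\, 2m_1 \pmod{2d_1},\, \dots,\, 2m_n \pmod{2d_n}\}$, I must show (i) $\mathcal{C}'$ covers $\mathbb{Z}$; (ii) its moduli $2, 2d_1, \dots, 2d_n$ are pairwise distinct; (iii) their least common multiple is $2D$; (iv) $\mathcal{C}'$ is minimal; and (v) $\mathcal{C}'$ is a CDL covering system. Throughout I may assume $d_1 \ge 2$: if $1$ occurred among the $d_i$, minimality of the given system would force $n = 1$ and the system to be the trivial $\{0 \pmod 1\}$, a degenerate case ($D = 1$) that I exclude. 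Items (i)--(iii) reduce to one-line computations; what little content there is sits in (iv), and (v) will follow at once from Corollary~\ref{Bangcoro}.

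For (i): an odd integer lies in $1 \pmod 2$, and an even integer $2j$ lies in $2m_i \pmod{2d_i}$ as soon as $j \equiv m_i \pmod{d_i}$, which holds for some $i$ because the original system covers $\mathbb{Z}$. For (ii): $2 < 2d_1 < 2d_2 < \dots < 2d_n$ since $2 \le d_1 < \dots < d_n$. For (iii): multiplying every argument of an lcm by $2$ multiplies the lcm by $2$, so $\mathrm{lcm}(2, 2d_1, \dots, 2d_n) = 2\,\mathrm{lcm}(1, d_1, \dots, d_n) = 2D$.

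For (iv) I would show that deleting any single congruence from $\mathcal{C}'$ destroys the covering property. Deleting $1 \pmod 2$ leaves classes consisting solely of even integers, so no odd integer is covered. Deleting some $2m_i \pmod{2d_i}$ instead: since no even integer lies in $1 \pmod 2$, an arbitrary even integer $2j$ would have to be caught by a surviving class $2m_j \pmod{2d_j}$ with $j \ne i$, i.e. $j \equiv m_j \pmod{d_j}$; letting $j$ range over $\mathbb{Z}$, this would make $\{m_j \pmod{d_j} : j \ne i\}$ a covering system, contradicting the minimality of the original. Hence $\mathcal{C}'$ is minimal.

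Finally, for (v): by (ii) the moduli of $\mathcal{C}'$ are pairwise distinct, and being $2, 2d_1, \dots, 2d_n$ they are all even, so $3$ is not among them; in particular $2$, $3$ and $6$ cannot occur together as moduli, and Corollary~\ref{Bangcoro} yields that $\mathcal{C}'$ is a CDL covering system (the prime attached to the modulus $2$ being $3 = 2^2 - 1$). I expect the only genuine wrinkle to be the minimality argument in (iv)---specifically, keeping straight that the parity bookkeeping simultaneously rules out $1 \pmod 2$ being redundant and transfers redundancy of an even class back to the original system---together with the need to dispose of the degenerate $D = 1$ case at the outset; everything else is routine.
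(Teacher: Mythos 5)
Your proof is correct and follows essentially the same route as the paper: the paper dismisses points (i)--(iv) with ``it is easy to check'' and, exactly as you do in (v), obtains the CDL property from Corollary~\ref{Bangcoro} by observing that all moduli of the new system are even, so $3$ cannot occur and $2,3,6$ are never simultaneous moduli. You merely supply the routine verifications the paper omits (and sensibly set aside the degenerate $D=1$ case, where the doubled moduli would coincide); the only blemish is the notational clash in step (iv), where $j$ denotes both the integer in $2j$ and the surviving index, but the argument is unambiguous.
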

\begin{proof}
Given a minimal covering system $\{m_1 \pmod{d_1}, m_2 \pmod{d_2}, \cdots, m_n \pmod{d_n} \}$ with distinct moduli such that $\text{lcm}(d_1, d_2, \cdots, d_n) = D$, it is easy to check $\{1 \pmod{2}, 2m_1 \pmod{2d_1}, 2m_2 \pmod{2d_2}, \cdots, 2m_n \pmod{2d_n}\}$ is a minimal covering system with distinct moduli such that $\text{lcm}(2, 2d_1, 2d_2, \cdots, 2d_n) = 2D$. It remains to prove we can pick distinct prime divisors of $2^2 - 1$, $2^{2d_1} - 1$, $2^{2d_2} - 1$, $\cdots$, and $2^{2d_n} - 1$. As $3$ is not in $\{2, 2d_1, 2d_2, \cdots, 2d_n\}$, our proposition follows by Corollary~\ref{Bangcoro}.
\end{proof}

\section{Verification of a conjecture by Chen}\label{Chenconjsec}
Recall that Chen's Problem 2 asks whether it is true that $b \geq 11184810$ when $a \pmod{b} \subset \mathcal{U}$? In this section, we provide a positive answer to this question. It suffices to prove that for any positive even integer $b$ and odd integer $a$ with $0 < a < b < 11184810$, there exists some $x = p + 2^k$ such that $x \equiv a \pmod{b}$.

A natural approach is to identify a natural number $x$ of the form $p + 2^k$ in every arithmetic progression $a \pmod{b}$ we aim to exclude. However, conducting such exhaustive exclusions would require thousands of hours of computation. Therefore, we have redesigned an algorithm based on Dirichlet's theorem on primes in arithmetic progressions.

\begin{lemma}\label{ChenQ2lem}
Let $b$ be an even natural number. If for some natural number $m$, the set of odd residues modulo $b$ is a union of $m$ shifts of the reduced residue systems by $2^k$ modulo $b$, where $1 \leq k \leq m$, then $a \pmod {b}$ is not an arithmetic progression in $\mathcal{U}$.
\end{lemma}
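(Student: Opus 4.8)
The plan is to show directly that under the stated hypothesis, the arithmetic progression $a \pmod b$ contains a number of the form $p + 2^k$ for suitable prime $p$. First I would fix the even modulus $b$ and the odd residue $a$, and restate the hypothesis as a covering condition: there exist $k_1, \dots, k_m$ with $1 \le k_j \le m$ such that every odd residue $r \pmod b$ lies in $2^{k_j} + (\mathbb{Z}/b)^\times$ for at least one $j$. Applying this to $r = a$, we obtain an index $j$ and a residue $u$ coprime to $b$ with $a \equiv u + 2^{k_j} \pmod b$. The point is that $u$ being a unit modulo $b$ is exactly the condition needed to invoke Dirichlet's theorem on primes in arithmetic progressions: there are infinitely many primes $p \equiv u \pmod b$, and in particular one can choose such a prime $p$. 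Then $p + 2^{k_j} \equiv u + 2^{k_j} \equiv a \pmod b$, so $p + 2^{k_j}$ is a number of the form $p + 2^k$ lying in $a \pmod b$, which shows $a \pmod b$ is not contained in $\mathcal{U}$.

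A point I would be careful about is the status of $a$ itself as a potential ``small'' obstruction and whether the prime $p$ we pick is genuinely an odd prime with $k_j \ge 1$, i.e. that $p + 2^{k_j}$ is a legitimate representation. Since $u$ is a unit modulo the even number $b$, $u$ is odd, so the primes $p \equiv u \pmod b$ furnished by Dirichlet are odd (for all but possibly the finitely many small primes dividing $b$, which we simply avoid by taking $p$ large); and $k_j$ ranges over $1 \le k_j \le m$, so $2^{k_j}$ is an even power of $2$ with positive exponent. Hence $p + 2^{k_j}$ is an odd number genuinely of the form $p + 2^k$. I would also note that the hypothesis is stated for the set of \emph{all} odd residues, so no edge case about $a$ being even arises — and if $\gcd(a,b) > 1$ this does not obstruct anything, since the decomposition $a \equiv u + 2^{k_j}$ does not require $a$ to be a unit.

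The main obstacle, such as it is, is not conceptual but lies in making sure the logical reduction at the start of the section is airtight: the section opener reduces Chen's Problem 2 to showing that for every even $b < 11184810$ and every odd $a$ with $0 < a < b$, the progression $a \pmod b$ meets $\{p + 2^k\}$; this lemma then reduces that, in turn, to the purely combinatorial covering condition about shifts of the reduced residue system. So the real work — the ``hard part'' — is deferred to the subsequent computational verification that every relevant $b$ actually satisfies the hypothesis of this lemma for some $m$; the lemma itself only needs the clean Dirichlet argument above. I would therefore keep the proof short, essentially: unwind the hypothesis at $r = a$, extract the unit $u$ and exponent $k_j$, apply Dirichlet to get a large prime $p \equiv u \pmod b$, and conclude $p + 2^{k_j} \equiv a \pmod b$.
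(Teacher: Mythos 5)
Your proposal is correct and is essentially the same argument as the paper's: unwind the covering hypothesis at the residue $a$ to find $k$ with $\gcd(a - 2^k, b) = 1$, apply Dirichlet's theorem to obtain a prime $p \equiv a - 2^k \pmod{b}$, and conclude that $p + 2^k$ lies in $a \pmod{b}$. The paper states it as a contradiction while you argue directly, and your extra care about $p$ being odd and $k \geq 1$ is a harmless refinement of the same proof.
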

\begin{proof}
Suppose $a \pmod{b}$ is in $\mathcal{U}$, then $a$ is odd and $b$ is even. Suppose for some natural number $m$ we have
\[
\{1, 3, \cdots, b - 1\} = \bigcup_{k = 1}^m \{b_1 + 2^i, \cdots, b_{\varphi(b)} + 2^k\},
\]
where $\{b_1, \cdots, b_{\varphi(b)}\}$ is the reduced residue system modulo $b$. Then for every odd residue $1 \leq j \leq b - 1$ modulo $b$, we have $j - 2^k \in \{b_1, \cdots, b_{\varphi(b)}\}$ for some $1 \leq k \leq m$, i.e., we have $(j - 2^k, b) = 1$ for some $1 \leq k \leq m$. By Dirichlet's theorem on primes in arithmetic progressions, we know $j - 2^k + b y = p$ for some positive integer $y$ and some prime $p$, i.e., the arithmetic progression $j \pmod{b}$ contains a number of the form $p + 2^k$. This is a contradiction to $a \pmod{b}$ is in $\mathcal{U}$.
\end{proof}

Let $b = 2^j b'$, where $b'$ is the largest odd divisor of $b$. Since $2^{\text{ord}_2(b')} \equiv 1 \pmod{b'}$, it is adequate to verify whether the condition stated in Lemma~\ref{ChenQ2lem} holds for some $m \leq j + \text{ord}_2(b')$. Now we explain our algorithm based on Lemma~\ref{ChenQ2lem}.

\begin{algorithm}\label{ChenQ2algorithm}
We start with the even integer $b = 2$ and apply the following process to all even integers less than $11184810$.
\begin{enumerate}
    \item For an even integer $b$, compute the set of reduced residue systems modulo $b$ and denote it by $R_b$. Let $O_b$ represent the set of odd numbers in the range $0$ to $b-1$.
    \item From $O_b$, remove shifts of $O_b$ by $2$, then by $2^2$, and so on modulo $b$. If the resulting set is empty, proceed to the next even integer $b + 2$ and repeat the process from Step $(1)$.
\end{enumerate}
\end{algorithm}

The execution time for Algorithm~\ref{ChenQ2algorithm} to process all even integers less than $11184810$ is approximately 5 hours, confirming a positive answer to Chen's Question 2.

For $b = 11184810$, after removing the 24 shifts of $R_{11184810}$ by $2, 2^2, \ldots, 2^{24}$ modulo $11184810$, there are 48 odd residues left. We can check that they match with the arithmetic progressions listed in Theorem~\ref{apthm}. Consequently, all arithmetic progressions $a \pmod{11184810}$ in $\mathcal{U}$ correspond to some \emph{CDL covering system}. The authors wonder whether this holds true in general.

\section{Improvement on $\overline{d}$}~\label{upperboundsec}
\subsection{The algorithms by Habsieger and Roblot}
While the paper by Habsieger and Roblot \cite{HabsiegerRoblot2006} presents an effective algorithm for calculating the upper bound of $\overline{d}$, there are certain inaccuracies and ambiguities in the expression. Therefore, for the sake of clarity and self-explanatory content, we find it necessary to reintroduce the algorithm in our own work.

Let $M$ be a positive integer that is the product of some distinct odd primes. Let
\[
f_M\left( \overline{m} \right) =\left\{ \overline{k}\in \mathbb{Z}/ \text{ord}_2(M) \mathbb{Z}:\overline{m}-2^{\overline{k}}\in \left( \mathbb{Z}/M\mathbb{Z} \right) ^* \right\}
\]
and
\[
\delta _M\left( \nu \right) =\left\{ \overline{m}\in \mathbb{Z}/M\mathbb{Z}:\left| f_M\left( \overline{m} \right) \right|=\nu \right\},
\]
where $\overline{m}$ a residue class modulo $M$ and $0 \leq \nu \leq \text{ord}_2(M)$. Then the following lemma by Habsieger and Roblot provides an upper bound for $\overline{d}$.
\begin{lemma}[{\cite[Lemma 2]{HabsiegerRoblot2006}}]\label{HRlemma}
We have
\[
\overline{d} \leq \sum_{\nu =0}^{\text{ord}_2(M)}{\delta _M\left( \nu \right) \min \left( \frac{1}{M},\frac{2\nu}{\text{ord}_2(M) \varphi \left( M \right) \log 2} \right)}.
\]
\end{lemma}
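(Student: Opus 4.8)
The plan is to bound $\overline{d}$ by counting, for each $n \le N$, how many ways $n$ can be written as $p + 2^k$, and then summing these counts in two complementary regimes depending on how "prime-friendly" the residue class of $n$ modulo $M$ is. First I would fix $N$ large and consider the generating quantity $S = \sum_{k \ge 0, \, 2^k \le N} \pi(N - 2^k; M, \cdot)$ — that is, I would count pairs $(p, k)$ with $p + 2^k \le N$. Every $n \le N$ of the form $p + 2^k$ is counted at least once, so $d(N) \cdot N \le \#\{n \le N : n = p + 2^k\} \le$ (number of such pairs). On the other hand, for a fixed residue $\overline{m} = n \bmod M$, a pair $(p,k)$ with $p + 2^k = n$ forces $p \equiv n - 2^k \pmod M$, and for $p$ to be a prime exceeding $M$ we need $n - 2^k \in (\mathbb{Z}/M\mathbb{Z})^*$; moreover $2^k \bmod M$ depends only on $k \bmod \operatorname{ord}_2(M)$. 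Hence the admissible residues $\overline{k} \in \mathbb{Z}/\operatorname{ord}_2(M)\mathbb{Z}$ are exactly the set $f_M(\overline{m})$, of size $|f_M(\overline{m})| = \nu$ when $\overline{m}$ contributes to $\delta_M(\nu)$.

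Next I would estimate the contribution of each residue class $\overline{m}$ in two ways and take the smaller. The \emph{trivial} bound: the number of $n \le N$ with $n \equiv \overline{m} \pmod M$ that are representable is at most the total number of such $n$, namely $\sim N/M$; summing the trivial bound over all $M$ residue classes recovers density $1$, so this is only useful for classes where the second bound is worse. The \emph{sieve/prime-counting} bound: for $n \equiv \overline{m}$, each representation uses one of the $\nu$ admissible progressions $\overline{k}$, and within a dyadic count, $\sum_k \pi(N - 2^k; M, n - 2^k)$ is controlled — the number of $(p,k)$ with $p + 2^k \le N$, $p \equiv n - 2^k$, and $\overline{k} \in f_M(\overline{m})$ is asymptotically at most $\frac{\nu}{\operatorname{ord}_2(M)} \cdot \frac{1}{\varphi(M)} \sum_{2^k \le N} \frac{N}{\log(N/2^k)}$ plus lower-order terms, and the sum $\sum_{2^k \le N} \frac{1}{\log(N - 2^k)}$ is $\sim \frac{\log N}{\log 2} \cdot \frac{1}{\log N} \cdot(\text{const})$; being careful with the dyadic pieces one gets $\sum_{2^k \le N} \frac{1}{\log(N-2^k)} \le \frac{2 + o(1)}{\log 2} \cdot \frac{N}{\log N} \cdot \frac{1}{N}$-type behaviour, which upon multiplying through yields the factor $\frac{2\nu}{\operatorname{ord}_2(M)\varphi(M)\log 2}$. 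So the number of representable $n$ in class $\overline{m}$, divided by $N$, is at most $\min\left(\frac{1}{M}, \frac{2\nu}{\operatorname{ord}_2(M)\varphi(M)\log 2}\right) + o(1)$.

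Then I would sum over residue classes: grouping the $M$ classes by the value $\nu = |f_M(\overline{m})|$, there are exactly $\delta_M(\nu)$ classes with a given $\nu$, giving
\[
d(N) \le \sum_{\nu = 0}^{\operatorname{ord}_2(M)} \delta_M(\nu) \min\left(\frac{1}{M}, \frac{2\nu}{\operatorname{ord}_2(M)\varphi(M)\log 2}\right) + o(1),
\]
and letting $N \to \infty$ gives the claimed bound on $\overline{d}$. The key inputs are: (i) the Brun–Titchmarsh inequality (or a Selberg sieve upper bound) to bound $\pi(N - 2^k; M, n - 2^k)$ by $\frac{2(N - 2^k)}{\varphi(M)\log((N-2^k)/M)}$ uniformly, which is where the constant $2$ enters; and (ii) the elementary estimate for $\sum_{0 \le k, \, 2^k \le N} \frac{1}{\log(N - 2^k)}$, handling the top few values $2^k$ close to $N$ (where $n - 2^k$ is small and the sieve bound degenerates — these contribute only $O(N/\log N \cdot \log\log N / \log N) = o(N)$ since there are $O(\log N)$ such $k$ and each contributes $o(N/\log N)$ in aggregate after a dyadic split).

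The main obstacle is step (ii), making the dyadic sum estimate tight enough that the constant is exactly $2/\log 2$ rather than something larger: one must split the range of $k$ so that on the bulk $2^k \le N/2$ one has $\log(N - 2^k) \ge \log(N/2) = \log N - \log 2$, and count that there are at most $\log_2 N$ values of $k$, but crucially the Brun–Titchmarsh denominator should be $\log(N/M)$ not $\log N$ — tracking that $M$ is fixed while $N \to \infty$ so $\log(N/M) = (1 + o(1))\log N$ is what rescues the constant. The delicate part of the write-up is organizing the error terms from the extreme $k$'s and from the difference between $\pi(x)$ and $x/\log x$ so that they are genuinely $o(N)$; everything else is bookkeeping with the definitions of $f_M$ and $\delta_M$ and the Chinese Remainder Theorem splitting of $(\mathbb{Z}/M\mathbb{Z})^*$.
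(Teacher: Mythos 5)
Your proposal is correct and follows essentially the same route as the original argument of Habsieger and Roblot (this paper only cites their Lemma~2 and does not reprove it): the Brun--Titchmarsh inequality with constant $2$ applied to each admissible progression $p \equiv \overline{m} - 2^k \pmod{M}$, the count $\frac{\nu}{\text{ord}_2(M)}\cdot\frac{\log N}{\log 2}(1+o(1))$ of admissible exponents $k$ coming from the definition of $f_M(\overline{m})$, and the trivial bound $N/M$ per residue class, combined via the minimum and summed by grouping classes according to $\nu = |f_M(\overline{m})|$. The one place your write-up is more complicated than necessary is the treatment of exponents $k$ with $2^k$ close to $N$: no dyadic decomposition is needed, since $x/\log(x/M)$ is increasing for $x > eM$, so every term with $N-2^k>eM$ is at most $\frac{2N}{\varphi(M)\log(N/M)}$ and the remaining $O(\log N)$ pairs contribute $O(\log N)=o(N)$.
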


For example, when $M = 23205 = 3 \times 5 \times 7 \times 13 \times 17$, Lemma~\ref{HRlemma} implies $\overline{d} \leq 0.49250245$. However, when the value of $M$ is large, the algorithm for estimating $\overline{d}$ based on this lemma becomes less efficient.

Certainly, the function $f_M$ could have $2^{\text{ord}_2(M)}$ potential values, all of which are subsets of $\mathbb{Z}/\text{ord}_2(M) \mathbb{Z}$. Nevertheless, it seems that the function $f_M$ takes very few distinct values in practice. So Habsieger and Roblot refined their algorithm.
Let
\[
g_M\left( I \right) =\left\{ \overline{m}\in \mathbb{Z}/M\mathbb{Z}:f_M\left( \overline{m} \right) =I \right\}
\]
and $G_M\left(I\right)=\left|g_M\left(I\right)\right|$ for $I\subset \mathbb{Z}/\text{ord}_2(M) \mathbb{Z}$. Then it is easy to compute values of the function $g_M$ by induction on the number of prime factors of $M$.

Let $M_1,M_2$ be two positive odd integers, with $M_1=pM_2$ for some prime p not dividing $M_2$. For a prime $p$, the image of $f_p$ is clear, we have
\[
I_{p,0}=\mathbb{Z}/ \text{ord}_2(p) \mathbb{Z}
\]
with $G_p(I_{p,0}) = p - \text{ord}_2(p)$ and
\[
I_{p,\overline{j}}=(\mathbb{Z}/ \text{ord}_2(p) \mathbb{Z})\setminus\left\{ \overline{j} \right\}
\]
with $G_p(I_{p,\overline{j}})=1$ for each $\overline{j} \in \mathbb{Z} / \text{ord}_2(p) \mathbb{Z}$.

Let $I_2$ and $I_p$ be in the image of $f_{M_2}$ and $f_p$ respectively. Let $\tilde{I}_2$ and $\tilde{I}_p$ be the subsets of $\mathbb{Z}/ \text{ord}_2(M_1) \mathbb{Z}$ which are inverse images of $I_2$ and $I_p$ under the natural surjections $\mathbb{Z}/ \text{ord}_2(M_1) \mathbb{Z} \to \mathbb{Z}/ \text{ord}_2(M_2) \mathbb{Z}$ and $\mathbb{Z}/ \text{ord}_2(M_1) \mathbb{Z} \to \mathbb{Z}/ \text{ord}_2(p) \mathbb{Z}$ respectively. Then we have a set $I_1$ is in the image of $f_{M_1}$ with
\[
G_{M_1}\left( I_1 \right) =\sum_{I_1=\tilde{I}_2\cap \tilde{I}_p}{G_{M_2}\left( I_2 \right) G_p\left( I_p \right)},
\]
and all sets in the image of $f_{M_1}$ can be obtained this way. Since $\delta_M\left(\nu\right)=\sum_{\left|I\right|=\nu}G_M\left(I\right)$, we can apply Lemma~\ref{HRlemma} to obtain an upper bound estimate of $\overline{d}$.

With $M = 3 \times 5 \times 7 \times 11 \times 13 \times 17 \times 19 \times 31 \times 41 \times 73 \times 241 \times 257$, this refined algorithm implies $\overline{d} < 0.4909409303984105956480078184$. Note that during their computation, sets $I$ with large $G_M(I)$ values were excluded as they were unlikely to contribute significantly to the density, thus easing the computational load. As mentioned in their paper \cite{HabsiegerRoblot2006}, the process took 35 minutes on an Intel Xeon 2.4 GHz processor with a memory stack of 2.1GB. Halbsiger and Roblot also noted that the real limitation lies in memory availability, suggesting that memory constraints may become the primary bottleneck in such calculation.

\subsection{Our enhanced algorithm}
We first present the original version of the algorithm and will later explain how we improved it. Let $p$ be a prime. We define its \emph{admissible cluster} as the following multiset:
\[
\left\{\bigcup_{k=0}^{\text{ord}_2(p)-1}\{k\}\backslash\{0\},
\bigcup_{k=0}^{\text{ord}_2(p)-1}\{k\}\backslash\{1\},
\cdots,
\bigcup_{k=0}^{\text{ord}_2(p)-1}\{k\}\backslash\{\text{ord}_2(p)-1\},
\underbrace{\bigcup_{k=0}^{\text{ord}_2(p)-1}\{k\},\cdots,\bigcup_{k=0}^{\text{ord}_2(p)-1}\{k\}}_{p-\text{ord}_2(p)\text{ times}}\right\}.
\]
For a set $I$, we define its \emph{augmented set} with respect to $(M_2, \ell)$ as
\[
\bigcup_{k=0}^{\displaystyle\frac{\text{ord}_2(M_2 \ell)}{\text{ord}_2(M_2)}}\bigcup_{a\in I}\{a+k\text{ord}_2(M_2)\}.
\]
Now, we can define the \emph{admissible cluster of a set of two primes $\{p_1, p_2\}$}. This cluster is the the intersection of the augmented sets derived from the admissible cluster of $p_1$ with respect to $(p_1, p_2)$ and the augmented sets derived from the admissible cluster of $p_2$ with respect to $(p_2, p_1)$. More generally, we can define the \emph{admissible cluster of a set of primes $\{p_1, p_2, p_3, \cdots \}$}. With this notation, we are ready to present our original algorithm.

\begin{algorithm}\label{upperdensityalgorithm}
We select a set of primes $\mathfrak{P}$ and proceed the following 6 steps to derive an upper bound estimate for $\overline{d}$ corresponding to $\mathfrak{P}$.
\begin{enumerate}
    \item Divide the set of primes $\mathfrak{P}$ manually into two groups, denoted as $\mathfrak{P}_L$ and $\mathfrak{P}_R$.
    \item Generate the admissible clusters for each prime in $\mathfrak{P}_L$.
    \item For the first group of primes $\mathfrak{P}_L  = \{p_1, p_2, p_3, \cdots \}$, begin by identifying the augmented sets of the sets in the admissible cluster of $p_1$ with respect to $(p_1,p_2)$, and similarly, find the augmented sets of the sets in the admissible cluster of $p_2$ with respect to $(p_2,p_1)$. Then, compute their intersection to obtain the admissible cluster of $\{p_1, p_2\}$. Continue this process inductively for the remaining primes in $\mathfrak{P}_L$ to derive the admissible cluster of $\mathfrak{P}_L$.
    \item Perform analogous operations in Step $(2)$ and Step $(3)$ on the primes in $\mathfrak{P}_R$ to compute the admissible cluster of $\mathfrak{P}_R$.
    \item Compute the augmented sets of the sets in the admissible cluster of $\mathfrak{P}_L$ with respect to $\left(\prod_{p\in \mathfrak{P}_L}p,\prod_{p\in \mathfrak{P}_R}p\right)$, and compute the augmented sets of the sets in the admissible cluster of $\mathfrak{P}_R$ with respect to $\left(\prod_{p\in \mathfrak{P}_R}p,\prod_{p\in \mathfrak{P}_L}p\right)$. Then, perform intersection operations between these two sets of augmented sets. Record the number of sets in the resulting intersections.
    \item Utilize the formula $\overline{d} \leq \sum_{\nu=0}^{\text{ord}_2(M)} \delta_M(\nu) \min \left(\frac{1}{M}, \frac{2 \nu}{\text{ord}_2(M) \varphi(M) \log 2}\right)$, where $M$ is the product of the primes in $\mathfrak{P}$, to derive an upper bound of $\overline{d}$.
\end{enumerate}
\end{algorithm}

In Step $(1)$, we divide the elements in set $\mathfrak{P}$ into two subsets, $\mathfrak{P}_L$ and $\mathfrak{P}_R$, to save time and memory, since we do not need to list all sets obtained from the intersection operation in Step $(5)$; instead, we only need the number of elements in the intersection sets. Our rule for partitioning the set $\mathfrak{P}$ is to make the products of elements in the two subsets $\mathfrak{P}_L$ and $\mathfrak{P}_R$ approximately equal.

Next, we explain the enhancements made to Algorithm~\ref{upperdensityalgorithm}.

\subsubsection{Enhancements to improve running speed}
Algorithm~\ref{upperdensityalgorithm} can provide estimates for $\overline{d}$, but it is time-consuming.
Therefore, the following adjustments can be made: in the algorithm, there is no need to use the numbers themselves. Instead, a binary matrix, denoted by $0-1$ entries, can be used to indicate whether a number is in a set, with $1$ representing existence and $0$ representing non-existence. Consequently, the intersection operation of two sets can be considered as multiplying the corresponding rows, which is equivalent to performing the Hadamard product. The step of counting the number of elements can also be regarded as summing up the elements in a row. After implementing these adjustments, it is feasible to integrate the use of Python's NumPy package, enabling multi-core CPU computing.

In testing, compared to the traditional non-matrix calculation method, the enhanced algorithm can accelerate the speed by approximately 1000 times.

\subsubsection{Enhancements to improve memory efficiency}
In practice, the memory usage of the mentioned algorithm is substantial, especially due to the presence of many duplicate sets within the set clusters. For instance, when computing the upper bound for $\overline{d}$ corresponding to the set of 10 primes
\[
\{3, 5, 7, 11, 13, 17, 19, 31, 41, 73\},
\]
the required memory has surpassed 64GB. To conserve memory and expedite the process, we introduce a new single-column matrix, known as the multiplicity matrix, to store the multiplicity of each row's corresponding set in the family of sets. Consequently, when conducting the Hadamard product on two rows, we only need to multiply the corresponding elements in the multiplicity matrix. Furthermore, after each intersection operation between clusters of sets is completed, a deduplication operation is executed. If two rows in the $0-1$ matrix are the same, the rows are merged into one, and the corresponding elements in the multiplicity matrix are summed.

With these improvements, the algorithm can significantly conserve memory. This enables the calculation of the upper bound for $\overline{d}$ corresponding to the set of 12 primes
\[
\{3,5,7,11,13,17,19,31,41,733,241,257\}
\]
in just 24 minutes on a computer with 16GB of memory and an i9-13980HX CPU.

\subsubsection{Utilizing GPU computing to further improve efficiency}
To further optimize the process, we made a slight adjustment to Step $(6)$ of Algorithm~\ref{upperdensityalgorithm}: The large $0-1$ matrix is divided into several blocks and loaded into the GPU, with the size of each block determined by the GPU memory.

By utilizing GPU computation implemented through Python's CuPy package, we can conserve approximately one-third of the memory and significantly increase the speed, with the exact rate depending on the GPU performance. On a computer with 16GB of memory, an i9-13980HX CPU, and an RTX4070 Laptop GPU, the computation with the set of 12 primes
\[
\{3,5,7,11,13,17,19,31,41,73,241,257\}
\]
considered by Habsieger and Roblot can be completed in just 3 minutes, compared to their original time of 35 minutes. Since we do not drop any set $I$ for which $G_M(I)$ is large, our result $\overline{d} \leq 0.49089834$ is slightly better than their $\overline{d} < 0.4909409303984105956480078184$.

Finally, with these enhancements, on a platform with 1536GB of memory, two E5-2697v2 CPUs, and a V100 GPU, it took approximately 167 hours to obtain $\overline{d} < 0.490341088858244$ when using the set
\[
\{3,5,7,11,13,17,19,23,29,31,37,41,61,73\}
\]
for our computation. This proves our Theorem~\ref{upperboundthm}.

\subsection{Our results and some discussion on Theorem~\ref{upperboundthm}} Due to limitations in the NumPy and CuPy packages we utilize, the elements in matrices cannot exceed $2^{63}$. This implies that the product of the selected primes cannot surpass $2^{63}$. Hence, there is a technical aspect to selecting the prime set $\mathfrak{P}$. We explain why we have picked the set
\[
\{3,5,7,11,13,17,19,23,29,31,37,41,61,73\}.
\]
We begin by analysing some data for the set
\[
\{3,5,7,11,13,17,19,31,41,73,241,257\}
\]
used in Habsieger and Roblot's paper \cite{HabsiegerRoblot2006}. We list the process of expanding the set ${3}$ to include the primes mentioned, adding them one by one, along with the corresponding decrease in the upper bound estimates for $\overline{d}$.
\[
\begin{array}{|l|l|l|}
\hline
\text{Set of primes} & \text{Estimates for } \overline{d} & \text{Improvements} \\
\hline
\{3\} & 0.5 &\\
\hline
\{3,5\} & 0.5& 0\\
\hline
\{3,5,7\} & 0.5& 0\\
\hline
\{3,5,7,11\} & 0.49807089 & 0.00192911\\
\hline
\{3,5,7,11,13\} & 0.49621815 & 0.00185274\\
\hline
\{3,5,7,11,13,17\} & 0.49252410 & 0.00369405\\
\hline
\{3,5,7,11,13,17,19\} & 0.49185782 & 0.00066628\\
\hline
\{3,5,7,11,13,17,19,31\} & 0.49143385 & 0.00042397\\
\hline
\{3,5,7,11,13,17,19,31,41\} & 0.49115839 & 0.00027546\\
\hline
\{3,5,7,11,13,17,19,31,41,73\} & 0.49107930 & 0.00007909\\
\hline
\{3,5,7,11,13,17,19,31,41,73,241\} & 0.49098557 & 0.00009373\\
\hline
\{3,5,7,11,13,17,19,31,41,73,241,257\} & 0.49089834 & 0.00008723\\
\hline
\end{array}
\]

We have observed that numbers with $2$ as a primitive root play an important role. Additionally, through our testing, we have observed that the computation time tends to increase as the order of $2$ modulo the product of primes grows larger. Noting that the order of $2$ modulo $3 \times 5 \times 7 \times 11 \times 13 \times 17 \times 19 \times 31 \times 41 \times 73 \times 241 \times 257$ is $720 = 2^4 \times 3^2 \times 5$, we decide to add primes $37$ and $61$ with $\text{ord}_2(37) = 36 = 2^2 \times 3^2$ and $\text{ord}_2(61) = 60 = 2^2 \times 3 \times 5$ to the set,
and remove the prime $257$ with $\text{ord}_2(257) = 16 = 2^4$ from the set, reducing the order of $2$ modulo the product of primes from $720$ to $360$.
\[
\begin{array}{|l|l|l|}
\hline
\{3,5,7,11,13,17,19,31,41,73,241,257\} & 0.49089834 & \text{control group}\\
\hline
\{3,5,7,11,13,17,19,31,37,41,61,73,241\} & 0.49056186 & \text{added $37$ and $61$ and removed $257$}\\
\hline
\end{array}
\]

So far, all primes less than $100$ with a base $2$ order of $360$ or its factors are already in the set. Thus, we can only consider adding primes with base $2$ orders that are not factors of $360$. Since the magnitude of the primes also affects the computation time to a certain extent, we would like to replace $241$ with a smaller prime. Considering the sequence of primes with $2$ as a primitive root $3, 5, 11, 13, 19, 29, 37, 53, 59, 61, \cdots$, we see the relatively small primes in this sequence but not in the set are only $29$ with $\text{ord}_2(29) = 28 = 2^2 \times 7$ and $53$ with $\text{ord}_2(53) = 52 = 2^2 \times 13$. We also consider small primes with a base $2$ order of $11$ or $2^s \times 11$ for some $s \geq 1$, thus we also take $23$ as a candidate. Below is some data for comparison. We find that replacing $241$ with $29$ yields the best result.
\[
\begin{array}{|l|l|l|}
\hline
\{3,5,7,11,13,17,19,31,37,41,61,73,241\} & 0.49056186 & \text{control group}\\
\hline
\{3,5,7,11,13,17,19,29,31,37,41,61,73\} & 0.49041415 & \text{replaced $241$ with $29$}\\
\hline
\{3,5,7,11,13,17,19,31,37,41,53,61,73\} & 0.49060353 & \text{replaced $241$ with $53$}\\
\hline
\{3,5,7,11,13,17,19,23,31,37,41,61,73\} & 0.49062494 & \text{replaced $241$ with $23$}\\
\hline
\end{array}
\]

On the other hand, noting that the current product of primes is already close to $2^{63}$, to ensure the accuracy of the result, we would like to add just one more prime. Since the effects of replacing $241$ with $23$ and $53$ are similar, while $23 < 53$ and $\text{ord}_2(23) < \text{ord}_2(53)$, we decide to add $23$ to save computation time, resulting in the following outcome.
\[
\begin{array}{|l|l|}
\hline
\{3,5,7,11,13,17,19,23,29,31,37,41,61,73\} & \mathbf{0.490341088858244}\\
\hline
\end{array}
\]

We have computed many examples using the algorithm. Now, we analyze the data generated from the computation process and provide some guidance for selecting elements in the set $\mathfrak{P}$. First, if we choose $\mathfrak{P}$ as the set consisting of the first $12$ odd prime numbers, then this gives the best result among all choices of $\mathfrak{P}$ with $12$ odd primes we have tested.
\[
\begin{array}{|l|l|}
\hline
\{3,5,7,11,13,17,19,23,29,31,37,41\} & 0.49064273 \\
\hline
\end{array}
\]
This leads us to believe that the set of first $m$ odd primes will generate the best result among all choices of sets with $m$ odd primes. However, we have the following counterexample.
\begin{counterexample}
For certain values of $m$, some set $\mathfrak{P}$ with $m$ odd primes, which is different from the set of the first $m$ odd primes, may yield a better upper bound estimate for $\overline{d}$.
\[
\begin{array}{|l|l|}
\hline \{3, 5, 7, 11, 13, 17\} & 0.49252410448328 \\
\hline \{3,5,7,13,17,241\} & 0.49243452466582\\
\hline
\end{array}
\]
\end{counterexample}

The reason of this is mainly because $3,5,7,13,17,241$ are all prime factors of $2^{24}-1$, while $\text{ord}_2(3 \times 5 \times 7 \times 11 \times 13 \times 17) = 120$. Therefore, we can guess if $|\mathfrak{P}_1| = |\mathfrak{P}_2|$ and $\text{ord}_2(\prod_{p \in \mathfrak{P}_1} p) < \text{ord}_2(\prod_{p \in \mathfrak{P}_2} p)$, then $\mathfrak{P}_1$ generates a better result than $\mathfrak{P}_2$. Unfortunately, this is still incorrect.
\begin{counterexample}
For some sets of primes $\mathfrak{P}_1$ and $\mathfrak{P}_2$ with $|\mathfrak{P}_1| = |\mathfrak{P}_2|$ and $\text{ord}_2(\prod_{p \in \mathfrak{P}_1} p) < \text{ord}_2(\prod_{p \in \mathfrak{P}_2} p)$, the upper bound for $\overline{d}$ generated from $\mathfrak{P}_1$ may not necessarily be superior to what $\mathfrak{P}_2$ produces.
\[
\begin{array}{|l|l|l|}
\hline \{3, 5, 7, 11, 13, 31, 41, 61, 151, 331, 1321\} & 0.49431157054919&\mathrm{base \ 2 \ order}=60 \\
\hline \{3,5,7,11,13,17,19,31,41,73,241\} & 0.49098556503467&\mathrm{base \ 2 \ order}=360\\
\hline
\end{array}
\]
\end{counterexample}

Finally, we found the following counterexample, which contradicts our intuitive choices of adding primes to a set to produce better estimates.
\begin{counterexample}
For two sets of primes $\mathfrak{P}$ and $\mathfrak{Q}$, if among all $q_i \in \mathfrak{Q}$, the best two results of the upper bounds generated from $\mathfrak{P} \bigcup \{q_i\}$ are $\mathfrak{P} \bigcup \{q_1\}$ and $\mathfrak{P} \bigcup \{q_2\}$, then the upper bound generated from $\mathfrak{P} \bigcup \{q_1,q_2\}$ is not necessarily the best among the upper bounds generated from $\mathfrak{P} \bigcup_{q_1, q_j \in \mathfrak{Q}} \{q_i,q_j\}$. (Although, according to a large number of empirical computation data, the upper bound generated from $\mathfrak{P} \bigcup \{q_1,q_2\}$ often tends to be better.) For example, let $\mathfrak{P}=\{3,5,7,11,17\}$ and $\mathfrak{Q}=\{19,23,29\}$. Then the following table shows the upper bounds for $\overline{d}$ generated from $\mathfrak{P} \bigcup\{19\}$, $\mathfrak{P} \bigcup\{23\}$, and $\mathfrak{P} \bigcup\{29\}$, respectively.
\[
\begin{array}{|l|l|l|}
\hline \{3, 5, 7, 11, 17, 19\}& 0.494609133024577 & \mathrm{best} \\
\hline \{3, 5, 7, 11, 17, 23\}& 0.494870288038247 & \mathrm{second\ best} \\
\hline \{3, 5, 7, 11, 17, 29\}& 0.494883239281366 &  \mathrm{worst} \\
\hline
\end{array}
\]
However, according to the table below, the result generated from $\mathfrak{P} \bigcup\{q_1,q_2\}=\{3,5,7,11,17,19,23\}$ is not the best.
\[
\begin{array}{|l|l|l|}
\hline \{3, 5, 7, 11, 17, 19, 23\}& 0.494486144723180 & \\
\hline \{3, 5, 7, 11, 17, 19, 29\}& 0.494213278918742 &\mathrm{best}\\
\hline \{3, 5, 7, 11, 17, 23, 29\}& 0.494618822711737 & \\
\hline
\end{array}
\]
\end{counterexample}


\bibliographystyle{plain}
\bibliography{bib}

\begin{thebibliography}{10}

\bibitem{Bang1886}
A.S. Bang.
\newblock Taltheoretiske {U}ndersogelser.
\newblock {\em Tidsskrift Mat.}, 5:70--80, 1886.

\bibitem{Bloom}
T.~Bloom.
\newblock Problem 16.
\newblock \url{https://www.erdosproblems.com/all}.
\newblock Accessed: 2024.

\bibitem{Chen2023}
Yonggao {Chen}.
\newblock {A conjecture of Erd{\H{o}}s on $p+2^k$}.
\newblock {\em arXiv e-prints}, page arXiv:2312.04120, December 2023.

\bibitem{ChenSun2004}
Yonggao Chen and Xuegong Sun.
\newblock On {R}omanoff's constant.
\newblock {\em J. Number Theory}, 106(2):275--284, 2004.

\bibitem{dePolignac1849}
A.~de~Polignac.
\newblock R\'{e}ch\`{e}rches nouvelles sur les nombres premiers.
\newblock {\em C.R.Acad.Sci.Paris S\'{e}r}, pages 397--401, 1849.

\bibitem{DDDR2020}
Gianna~M. Del~Corso, Ilaria Del~Corso, Roberto Dvornicich, and Francesco
  Romani.
\newblock On computing the density of integers of the form {$2^n+p$}.
\newblock {\em Math. Comp.}, 89(325):2365--2386, 2020.

\bibitem{ES2018}
Christian Elsholtz and Jan-Christoph Schlage-Puchta.
\newblock On {R}omanov's constant.
\newblock {\em Math. Z.}, 288(3-4):713--724, 2018.

\bibitem{Erdos1950}
Paul Erd\H{o}s.
\newblock On integers of the form {$2^k+p$} and some related problems.
\newblock {\em Summa Brasil. Math.}, 2:113--123, 1950.

\bibitem{HabsiegerRoblot2006}
Laurent Habsieger and Xavier-Fran\c{c}ois Roblot.
\newblock On integers of the form {$p+2^k$}.
\newblock {\em Acta Arith.}, 122(1):45--50, 2006.

\bibitem{HS2010}
Laurent Habsieger and Jimena Sivak-Fischler.
\newblock An effective version of the {B}ombieri-{V}inogradov theorem, and
  applications to {C}hen's theorem and to sums of primes and powers of two.
\newblock {\em Arch. Math. (Basel)}, 95(6):557--566, 2010.

\bibitem{Lv2007}
Guangshi L\"{u}.
\newblock On {R}omanoff's constant and its generalized problem.
\newblock {\em Adv. Math. (China)}, 36(1):94--100, 2007.

\bibitem{Pintz2006}
J.~Pintz.
\newblock A note on {R}omanov's constant.
\newblock {\em Acta Math. Hungar.}, 112(1-2):1--14, 2006.

\bibitem{Romani1978}
F.~Romani.
\newblock Computer techniques applied to the study of additive sequences.
\newblock {\em Thesis}, Scuola Normale Superiore di Pisa, 1978.

\bibitem{Romani1983}
F.~Romani.
\newblock Computations concerning primes and powers of two.
\newblock {\em Calcolo}, 20(3):319--336 (1984), 1983.

\bibitem{Romanoff1934}
N.~P. Romanoff.
\newblock \"{U}ber einige {S}\"{a}tze der additiven {Z}ahlentheorie.
\newblock {\em Math. Ann.}, 109(1):668--678, 1934.

\bibitem{vanderCorput1950}
J.~G. van~der Corput.
\newblock On de {P}olignac's conjecture.
\newblock {\em Simon Stevin}, 27:99--105, 1950.

\end{thebibliography}

\end{document}